\newcommand{\tikznode}[3][inner sep=0pt]{\tikz[remember
	picture,baseline=(#2.base)]{\node(#2)[#1]{\scriptsize{$#3$}};}}
\newcolumntype{L}{>{$}l<{$}} 
\definecolor{codegreen}{rgb}{0,0.6,0}
\definecolor{codegray}{rgb}{0.5,0.5,0.5}
\definecolor{codepurple}{rgb}{0.58,0,0.82}
\definecolor{backcolour}{rgb}{0.95,0.95,0.95}
\lstdefinestyle{mystyle}{
	backgroundcolor=\color{backcolour},   
	commentstyle=\color{codegreen},
	firstnumber=auto,
	keywordstyle=\color{magenta},
	numberstyle=\tiny\color{codegray},
	stringstyle=\color{codepurple},
	basicstyle=\fontsize{9}{10}\ttfamily\color{Blue},
	breakatwhitespace=false,         
	breaklines=true,                 
	captionpos=b,                    
	keepspaces=true,                 
	numbers=none,                    
	numbersep=5pt,                  
	showspaces=false,                
	showstringspaces=false,
	showtabs=false,                  
	tabsize=2
}
\lstdefinestyle{mystyle_small}{
    basicstyle=\fontsize{7}{9}\selectfont\ttfamily\color{Blue},
	backgroundcolor=\color{backcolour},   
	commentstyle=\color{codegreen},
	firstnumber=auto,
	keywordstyle=\color{magenta},
	numberstyle=\tiny\color{codegray},
	stringstyle=\color{codepurple},
	breakatwhitespace=false,         
	breaklines=true,                 
	captionpos=b,                    
	keepspaces=true,                 
	numbers=none,                    
	numbersep=5pt,                  
	showspaces=false,                
	showstringspaces=false,
	showtabs=false,                  
	tabsize=2
}
\numberwithin{equation}{section}
\newtheorem{theorem*}[equation]{Theorem*}
\newtheorem{theorem}[equation]{Theorem}
\newtheorem{lemma}[equation]{Lemma}
\newtheorem{definition}[equation]{Definition}
\newtheorem{proposition}[equation]{Proposition}
\newtheorem{claim}[equation]{Claim}
\newcommand{\rank}{\ensuremath{\mathsf{rank}}\xspace}
\newcommand{\abs}[1]{\ensuremath{\left|#1\right|}}
\newcommand*{\rom}[1]{\expandafter\@slowromancap\romannumeral #1@}
\newcommand{\mc}[1]{\ensuremath{\mathcal{#1}}}
\newcommand{\NN}{\ensuremath{\mathbb{N}}}
\newcommand{\CC}{\ensuremath{\mathbb{C}}}
\newcommand{\QQ}{\ensuremath{\mathbb{Q}}}
\newcommand{\sym}{\ensuremath{\mathfrak{S}}}
\newcommand{\weyl}{\ensuremath{\mathbf{W}}\xspace}
\newcommand{\schur}{\ensuremath{\mathbf{S}}\xspace}
\newcommand{\preschur}{\ensuremath{\mathbf{X}}\xspace}
\newcommand{\Hom}{\ensuremath{\mathsf{Hom}}\xspace}
\newcommand{\la}{\lambda}
\newcommand{\ghs}{\ensuremath{\mathsf{HS}}}
\let\oldytableaushort\ytableaushort
\renewcommand{\ytableaushort}[1]{ {\ytableausetup{centertableaux,notabloids} \oldytableaushort{#1}} }
\newcommand{\GL}{\ensuremath{\mathsf{GL}}}
\newcommand{\pierimaps}{{\texttt{PieriMaps}}\xspace}
\newlist{todolist}{itemize}{2}
\setlist[todolist]{label=$\square$}
\title{Young Flattenings in the Schur module basis}
\author{Lennart J. Haas and Christian Ikenmeyer}
\begin{document}

\maketitle

\noindent April 2021

\begin{abstract}
There are several isomorphic constructions for the irreducible polynomial representations of the general linear group in characteristic zero. The two most well-known versions are called Schur modules and Weyl modules. Steven Sam used a Weyl module implementation in 2009 for his Macaulay2 package PieriMaps. This implementation can be used to compute so-called Young flattenings of polynomials.
Over the Schur module basis Oeding and Farnsworth describe a simple combinatorial procedure that is supposed to give the Young flattening, but their construction is not equivariant.
In this paper we clarify this issue, present the full details of the theory of Young flattenings in the Schur module basis, and give a software implementation in this basis. Using Reuven Hodges' recently discovered Young tableau straightening algorithm in the Schur module basis as a subroutine, our implementation outperforms Sam's PieriMaps implementation by several orders of magnitude on many examples, in particular for powers of linear forms, which is the case of highest interest for proving border Waring rank lower bounds.
\end{abstract}

\noindent{\footnotesize \textbf{Keywords: } Young flattening, representation theory, Pieri's rule, border Waring rank, complexity lower bounds}

\smallskip

\noindent{\footnotesize \textbf{AMS Subject Classification 2020:} 05E10, 68Q17}

\smallskip

\noindent{\footnotesize \textbf{ACM Subject Classification:} Mathematics of computing $\to$ Mathematical software \\
Computing methodologies $\to$ Symbolic and algebraic manipulation $\to$ Computer algebra systems\\
Theory of computation $\to$ Computational complexity and cryptography $\to$ Algebraic complexity theory
}

\section{Motivation}
Young flattenings of polynomials are equivariant linear maps from a space of homogeneous polynomials to a space of matrices, where the row and column space are irreducible representations of $\GL_n := \GL(\mathbb C^n)$. One is usually interested in finding lower bounds for the rank of the image of a Young flattening, as it can be used to obtain lower bounds on the border Waring rank of a polynomial, and more generally for any border $X$-rank for a $\GL_n$-variety $X$, i.e., 
given a point $p$ to find a lower bound on the smallest $i$ such that $p$ lies on the $i$-th secant variety of $X$, see e.g.\ \cite{Landsberg2015}.
One early example are Sylvester's \emph{catalecticants} \cite{sylvester:1970:principles}.
Landsberg and Ottaviani \cite{landsberg_ottaviani:2011:new_lower_bounds_for_the_border_rank_of_matrix_multiplication} use Young flattenings in the tensor setting. The name \emph{Young flattening} was introduced in the predecessor paper \cite{landsbergottaviani2013}.
Young flattenings also appear in disguise in the area of algebraic complexity theory as \emph{matrices of partial derivatives}, \emph{shifted partial derivatives}, \emph{evaluation dimension}, and \emph{coefficient dimension} \cite{nisan_wigderson:1995:lower_bounds_on_arithmetic_circuits, gupta_et_al:2014:approaching_a_chasm_at_depth_four}.
They can in principle be used to find computational complexity lower bounds in many algebraic computational models such as border determinantal complexity (see \cite{landsbergmanivelressayre2013}) and border continuantal complexity \cite{bringmann:2018:on_algebraic_branching_programs_of_small_width}, which makes Young flattenings an interesting tool in the Geometric Complexity Theory approach by Mulmuley and Sohoni \cite{mulmuley_sohoni:2002:GCT}, \cite{mulmuley_sohoni:2008:GCTII}, \cite{burgisserlandsbergmanivelweyman2011}.
Limits of these methods (in the case of studying $X$-rank) have recently been proved in \cite{efremenko_garg_oliveira_wigderson:2017:BFRM, garg_makam_olveira_wigderson:2019:MBfRM}.
No such limits are known for using Young flattenings to study the orbit closure containment problems in geometric complexity theory. First results in this direction were obtained in \cite{efremenko_landsberg_schenck_weyman:2018:shifted_partial_derivatives}, where limits to the method of shifted partial derivatives are shown.
This was improved on in \cite{gesmudo_landsberg:2019:EPS_MS_PD}, where a setting was given in which Young flattenings give strictly more separation information than partial derivatives.

The \emph{Waring rank} of a homogeneous degree $d$ polynomial $p \in \schur^dV$ is defined as the smallest $r$ such that $p$ can be written as a sum of $r$ many $d$-th powers of homogeneous linear forms (arbitrary linear combinations of $d$-th powers are usually allowed if the base field is not algebraically closed).
For example $(x-y)^3+y^3 = x^3 - 3x^2y + 3xy^2$, hence $x^3 - 3x^2y + 3xy^2$ has Waring rank at most 2.
The \emph{border Waring rank} of $p$ is the smallest $r$ such that $p$ can be approximated arbitrarily closely coefficient-wise by polynomials of Waring rank at most $r$.
For example $3 \varepsilon x^2 y = \lim_{\varepsilon\to 0} ((x+\varepsilon y)^3-x^3)$, hence $x^2y$ has border Waring rank at most 2.

If a Young diagram $\la$ is contained in another Young diagram $\mu$ such that the column lengths of both diagrams differ by at most 1 in each column, then we have 
a unique nonzero equivariant map between
$\schur^dV \otimes \schur^\la V \rightarrow \schur^\mu V,
$
where $\schur^\la V$, $\schur^\mu V$, and $\schur^d V$ are irreducible polnomial $\GL_n$-representations, and $d$ is the difference in the number of boxes of $\mu$ and $\lambda$.
This is called the \emph{Pieri map}, and it induces a linear map
$\mc{F}_{λ,μ} : \schur^dV \to \text{End}(\schur^λV,\schur^μV).$
Since border Waring rank is subadditive, a lower bound on the border Waring rank of $p$ is obtained by rounding up to quotient of ranks 
\begin{equation}\label{eq:rankquotient}
\left\lceil\frac{\textup{rank}(\mc{F}_{λ,μ}(p))}{\textup{rank}(\mc{F}_{λ,μ}(x^d))}\right\rceil,\end{equation}
where $x$ is some variable that appears in $p$, and rank$(.)$ is the usual rank of matrices.

There are several isomorphic constructions for the irreducible polynomial representations of the general linear group in characteristic zero. The two best known versions are called Schur modules and Weyl modules and they only differ in the order of the row-symmetrizer and the column-symmetrizer in their definition of the Young symmetrizer.
This results in different bases for the irreducible representations.
Sometimes results that are proved in one basis are reproved in the other basis, but the proofs look significantly different (see e.g.\ \cite{bci:10} and \cite{manivelmichalek2014}). In fact, so far some results are only provable in a natural way over one basis and not the other, see e.g.\ \cite{ressayre2020}.
Based on an explicit paper by Olver over the Weyl module basis \cite{olver:1982:differential_hyperform}
Steven Sam in 2009 implemented his Macaulay2 package PieriMaps \cite{sam:2008:computing_inclusions_of_schur_modules:macaulay2},
which among other things can be used to compute the rank quotient \eqref{eq:rankquotient}, see Section~\ref{subsec:example_pieri} below.

The papers \cite{farnsworth:2015:koszul-young_flattenings} (in its Section 5\footnote{Although the description in the paper is wrong, the use of the software package is correct and gives the result claimed in the paper.}) and \cite{oeding:2016:border_ranks_of_monomials_v1} (only in version 1) describe the PieriMaps package as if it would be working in the Schur module basis and they assume that the Young flattenings have an extremly simple combinatorial description.
However, this is wrong (see Section~\ref{sec:oversimplification} below), which led to a revision of \cite{oeding:2016:border_ranks_of_monomials_v1}.

In this paper we work out the details of Young flattenings in the Schur module basis: We closely mimic the arguments in \cite{olver:1982:differential_hyperform}, but we take care of subtle sign issues that are not present in Olver's work over the Weyl module basis.
We then make use of a recent fast algorithm (and implementation) by Reuven Hodges for Young tableau straightening in the Schur module basis \cite{hodges:2017:CNIF}
to get a highly efficient Young flattening algorithm that outperforms Sam's PieriMaps implementation by several orders of magnitude in many examples. We obtain the most impressive speedup factor of 1000 for flattening the power of a linear form, which is the denominator of \eqref{eq:rankquotient}.

Our contribution is therefore twofold: We thoroughly clarify the theory of Young flattenings in the Schur module basis
and we present a new and efficient implementation for Young flattenings that uses Hodges' state-of-the-art straightening algorithm over the Schur module basis.

\section{Preliminaries}
A \emph{composition} $\nu$ of a number $d$ is a finite list of natural numbers adding up to $d$, i.e., $(3,0,2,4)$ is a composition of 9.
A \emph{partition} is a nonincreasing composition, for example $λ=(6,4,3)$ is a partition.
We write $\la\vdash d$ if $\la$ is a partition of $d$.
We write $λ_i=0$ if $i$ is greater than the number of entries in $λ$.
We define $\ell(\la) := \min\{i \mid \la_i =0\}-1$.
We identify a partition with its \emph{Young diagram}, which is a top-left justified array of boxes, i.e., the set of points $\{(i,j) \mid 1 \leq i \text{ and } 1 \leq j \leq λ_i\}$. For example, the Young diagram corresponding to $(6,4,3)$ is 
\[
\ytableaushort{\ \ \ \ \ \ ,\ \ \ \ ,\ \ \ }
\]
and we have $(2,4) \in λ$ and $(4,2) \notin λ$.
We see that $\ell(\la)$ is the number of rows of the Young diagram corresponding to $\la$.
We denote by $|λ|$ the number of boxes in $λ$, i.e., $|λ|=\sum_i λ_i$.
We denote by $\la^*$ the Young diagram obtained by reflecting $\lambda$ at the main diagonal, e.g., $(6,4,3)^* = (3,3,3,2,1,1)$. It follows that $\lambda_i^*$ is the length of the $i$-th column of $\lambda$.
We write $\lambda \subseteq \mu$ if for all $(i,j) \in \lambda$ we have $(i,j) \in \mu$. If $\lambda \subseteq \mu$, then
we denote by $\mu/\lambda$ the set of points that are in $\mu$ but not in $\lambda$. We call $\mu/\lambda$ a \emph{horizontal strip} if it has at most 1 box in each column. In this situation we write $\mu/\lambda \in \ghs$.

A Young diagram $λ$ whose entries are labeled with numbers is called a \emph{Young tableau} of shape~$λ$. For example,
\[
\ytableaushort{634233,3312,663}
\]
is a Young tableau of shape $(6,4,3)$.
A Young tableau is called \emph{semistandard} if the entries strictly increase in each column from top to bottom and do not decrease in each row from left to right. For example, \ytableaushort{1113,23} is a semistandard tableau.
We denote by $\sym_{λ}$ the symmetric group on the set $\{(i,j) \mid (i,j) \in \lambda\}$ of positions in $\lambda$. The group $\sym_{λ}$ acts on the set of all Young tableaux of shape $λ$ by permuting the positions. We write $\sigma T$ for the permuted Young tableau, where $λ$ is the shape of $T$ and $\sigma \in \sym_{λ}$.
For a subset $S \subseteq λ$ of positions we write $\sym_S$ to denote the symmetric group that permutes only the positions in $S$ among each other and fixes all other positions.

	Let $V^{\otimes λ} := V^{\otimes |λ|}$ be the $|\la|$-th tensor power of a vector space $V$ and associate to every tensor factor $V$ a position in $λ$.
	A rank 1 tensor $v = v_1 \otimes v_2 \otimes \cdots \otimes v_{|\la|}$ can now be represented by a Young diagram in whose $i$-th box we write the vector $v_i$.
	If we fix a basis $v_1, \dots, v_n$ of $V$, then a basis of $V^{\otimes λ}$ is obtained by all ways of writing $\{v_1,\ldots,v_n\}$ into the boxes of $λ$, allowing repetitions. If the fixed basis is clear from the context, then we write $i$ instead of $v_i$ into the boxes and obtain a Young tableau. The basis vector corresponding to the Young tableau $T$ is also denoted by $T$ when no confusion can arise, so for example if $x=v_1+v_2$ we can use the multilinearity of the tensor product to write
	\[
    \ytableaushort{x{v_2},{v_1}} = \ytableaushort{12,1} + \ytableaushort{22,1}
	\]

\subsection{The Weyl module basis}

Let λ be a Young diagram and let $(i, j) \in λ$ such that $(i+1, j) \in λ$ (i.e., the box below $(i,j)$ is still in $λ$). Then we define 	$B_{i,j} \coloneqq \{(i, k) \mid j \le k \le λ_i\} \cup \{(i+1, k)\mid 1 \le k \le j \}.$
	Pictorially, $B_{i,j}(λ)$ is the subset of the boxes of $λ$ given by collecting all boxes on the following path: Start at position $(i+1, 1)$ and move from left to right along row $i+1$ to box $(i+1,j)$, then switch the row to $(i,j)$ and move along row $i$ until reaching  $(i,λ_i)$. For example, $B_{1,3}((6,4,3))$ is given by the dotted boxes in the following diagram:
\[
\ytableaushort{\ \ \bullet \bullet\bullet\bullet,\bullet\bullet\bullet\ ,\ \ \ }.
\]

	\begin{definition}[Weyl module, {\cite[2.1.15]{weyman:2003:cohomology_of_vector_bundles_and_syzygies}}]
		\label{def:weyl_module}
		
		The \emph{Weyl module} $\weyl^λ(V)$ is defined as the quotient space $V^{\otimes λ}/P^λ$, where
		$P^λ \subseteq V^{\otimes λ}$ is the linear subspace generated by the following two types of vectors:
		
		\begin{enumerate}
			\item (Symmetric relation) $T - σ T$, if σ is a permutation that preserves the row indices of all positions of $λ$ (in other words, $σ$ permutes \emph{within the rows} of $λ$).
			\item (Shuffle relation) $\sum_{σ \in \sym_{B_{i,j}(λ)}} σ T,$ if $i,j \in \NN$ such that $(i, j), (i + 1, j) \in λ$.
		\end{enumerate}
	\end{definition}
The Weyl modules for Young diagrams $\la$ with at most $\dim V$ rows
form a complete list of pairwise non-isomorphic irreducible polynomial representations of $\GL(V)$.
In this paper we will not work with Weyl modules, but with the isomorphic Schur modules, which are defined in the following section.

\subsection{The Schur module basis}
\label{subsec:schur_basis}
	Let $T$ be a Young tableau of shape $λ$. Let $1 \leq i<j \leq λ_1$ be two column indices. Let $B$ and $C$ be two equally large sets of boxes, $B$ from column $i$ and $C$ from column $j$.
	An \emph{exchange tableau of $T$ corresponding to $B$ and $C$} is defined as the tableau arising from $T$ by exchanging the content of the boxes $B$ with the content of the boxes $C$ while preserving the vertical order of the entries in $B$ and~$C$.
	We denote this exchange tableau by $E^{B}_{C}(T)$.
	For a subset $C$ of boxes from column $j$,
	we write $E^{i}_{C}(T) := \bigcup_{B} E^{B}_{C}(T)$, where $B$ ranges over all cardinality $|C|$ subsets of boxes in column $i$.
	\begin{definition}[Schur module]
		\label{def:schur_module}
		The \emph{Schur module} $\schur^λV$ is defined as the quotient space $V^{\otimes λ}/Q^λ$ where
		$Q^λ \subseteq V^{\otimes λ}$ is the linear subspace generated by the following vectors
		
		\begin{enumerate}
			\item (Grassmann relation) $T + T'$, where $T'$ is obtained from $T$ by swapping two elements in the same column.
			\item (Plücker relation) $T - \sum_{T'\in E^i_{C}(T)}$ for any $i$ and any subset $C$ of a column $j\neq i$ with $\la_i^*\geq |C|$.
		\end{enumerate}
	\end{definition}

For example, in $\schur^λV$ we have $\ytableaushort{12,14} = 0$ and we have
$\ytableaushort{12,34}= - \ytableaushort{32,14}$ by the Grassmann relation. The Plücker relation gives
$$\ytableaushort{12,34} =  \ytableaushort{21,34} + \ytableaushort{13,24}$$ via $E^1_{\{(1,2)\}}$. Note that $\{(1,2)\}$ is the set containing the single box in row 1 and column 2.

\subsection{The Schur module via moving boxes between columns}

If we only consider the Grassmann relation, then we call the quotient $\preschur^{\la^*} V$. More formally, let $G^\la \subseteq V^{\otimes \la}$ be the linear subspace spanned by the $T + T'$, where $T'$ is obtained from $T$ by swapping two elements in the same column. The quotient $V^{\otimes \la}/G^\la$ is denoted by $\preschur^{\la^*} V$. Clearly, in the language of skew-symmetric powers we have
\begin{equation}\label{eq:Xnuwedges}
\preschur^\nu V \simeq ({\textstyle\bigwedge}^{\nu_1} V) \otimes \dots \otimes ({\textstyle\bigwedge}^{\nu_{\ell(\nu)}} V).
\end{equation}
In terms of explicit basees, this isomorphism 
maps each column from top to bottom to a skew-symmetric tensor and vice versa: For example, $\ytableaushort{12,34}$ is mapped to $x_1 \wedge x_3 \otimes x_2 \wedge x_4$.
We define
\[
Y_d := \bigoplus_{\text{composition $\alpha$ of $d$}} \preschur^\alpha V
\]
as an \emph{outer} direct sum. Note that
each
$\preschur^\alpha V \subseteq V^{\otimes |\alpha|}$, but there is no obvious embedding of $Y_d$ in $V^{\otimes |\alpha|}$.
In fact, there is a natural isomorphism
\begin{equation}\label{eq:naturalisomgrassmannian}
Y_d \simeq {\textstyle\bigwedge}^d(V^{\oplus d})
\end{equation}
that can be described explicitly in terms of basis vectors using first the isomorphism \eqref{eq:Xnuwedges}:
A standard basis vector
\[
x_{k_{1,1}} \wedge \cdots \wedge x_{k_{\nu_1,1}}\otimes x_{k_{1,2}} \wedge \cdots \wedge x_{k_{\nu_2,2}} \otimes \cdots \otimes x_{k_{1,d}} \wedge \cdots \wedge x_{k_{\nu_d,d}}
\]
is mapped to
\begin{equation}\label{eq:longwedge}
x_{k_{1,1},1} \wedge \cdots \wedge x_{k_{\nu_1,1},1}\wedge x_{k_{1,2},2} \wedge \cdots \wedge x_{k_{\nu_2,2},2} \wedge \cdots \wedge x_{k_{1,d},d} \wedge \cdots \wedge x_{k_{\nu_d,d},d},
\end{equation}
for example $x_1 \wedge x_3 \otimes x_2 \wedge x_4$ is mapped to $x_{1,1} \wedge x_{3,1} \wedge x_{2,2} \wedge x_{4,2}$.
We have an action of $\GL(V^{\oplus d})$ on $\bigwedge^d(V^{\oplus d})$, which induces an action of $\GL(V)$ via the group homomorphism $g \mapsto \text{id}_d \otimes g$, i.e., sending matrices $g$ to block diagonal matrices that have $d$ many copies of $g$ on their main diagonal.
With this action of $\GL(V)$, the isomorphism in \eqref{eq:naturalisomgrassmannian} is an isomorphism of $\GL(V)$-representations.

For given $i,j$, $i \neq j$, we consider the Lie algebra element $\mathfrak g_{i,j} \in \mathfrak{gl}(V^{\otimes d})$ that is a block matrix as follows: the block matrix is zero everywhere but the block $(i,j)$ is the identity matrix on $V$.
Clearly applying $\mathfrak g_{i,j}$ is a $\GL(V)$-equivariant map.
Acting with $\mathfrak g_{i,j}$ on the tensor from \eqref{eq:longwedge} gives
\begin{equation}
\sum_{1 \leq a \leq \nu_i}
x_{k_{1,1},1} \wedge \cdots \wedge x_{k_{\nu_1,1},1}\wedge \cdots \wedge
x_{k_{1,i},[i=a,j,i]} \wedge \cdots \wedge x_{k_{\nu_i,i},[i=a,j,i]}
\wedge \cdots \wedge x_{k_{d,1},d} \wedge \cdots \wedge x_{k_{\nu_d,d},d},
\end{equation}
where
\[
[i\!=\!a,j,i] \ :=\ \begin{cases}
j & \text{ if } i=a \\
i & \text {otherwise.}
\end{cases}
\]
We denote by the map $\sigma_{i,j}$ the application of $\mathfrak g_{i,j}$. Using the canonical isomorphisms \eqref{eq:naturalisomgrassmannian} and \eqref{eq:Xnuwedges} we can write this more explicitly:
For $x \in \Lambda^{\alpha_i}$, $y \in \Lambda^{\alpha_j}$, $z \in \Lambda^{\alpha_k}$ have
\begin{eqnarray*}
\sigma_{i,k}(x\otimes y \otimes z) &=& \sum_{1 \leq a \leq \alpha_i} (-1)^{a+\alpha_i+\alpha_j} x_{-a} \otimes y \otimes (x_a \wedge z) \\
\sigma_{k,i}(x\otimes y \otimes z) &=& \sum_{1 \leq c \leq \alpha_k} (-1)^{c+\alpha_j+1} (x \wedge z_c) \otimes y \otimes z_{-c}
\end{eqnarray*}

Here for a tensor $x \in \bigwedge^d V$ we define $x_{-k}$ as the tensor obtained by ``removing the $k$-th tensor position'' (this is only well-defined if a basis of $V$ is fixed and an ordering of the basis vectors is fixed). For example, if $x=e_3\wedge e_1 \wedge e_4 \wedge e_3$, then $x_{-3} = e_3\wedge e_1 \wedge e_3$,
and $x_3 = e_4$.

Note that if $\la^*_j=0$, then $\sigma_{j,j+1}(\preschur^{\la^*}) = \{0\}$.
Define $\mc{I}^\la := \langle\sigma_{i,i+1}(Y_d) \cap \preschur^{\la^*}V \mid i\in \mathbb N\rangle$.
Note that if $\la^*_{j}>0$, then
$\sigma_{j,j+1}(\preschur^{\la^*}) \subseteq \preschur^{\la^*-e_j+e_{j+1}}$, hence
\begin{equation}\label{eq:ideal}
\mc{I}^\la = \langle\sigma_{i,i+1}(\preschur^{\la^*+e_i-e_{i+1}}) \mid \la^*_{i+1}>0\rangle.
\end{equation}

\begin{proposition}[{\cite[Cor.~1]{towber:1979:young_symmetry}}]\label{pro:schurquotient}
$\schur^λV$ and $\preschur^{\la^*} V \slash \mc{I}^λ.$ are isomorphic representations of $\GL(V)$.
The isomorphism maps each basis vector given by a semistandard tableau to a basis vector corresponding to the same semistandard tableau.
\end{proposition}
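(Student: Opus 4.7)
The plan is to exhibit a natural $\GL(V)$-equivariant surjection $\pi\colon \preschur^{\la^*}V \twoheadrightarrow \schur^\la V$ and then prove $\ker(\pi) = \mc{I}^\la$; the proposition follows from the First Isomorphism Theorem. The Grassmann relations generate $G^\la$ by definition, and they also form one of the two families of generators of $Q^\la$, so $G^\la \subseteq Q^\la$; therefore the canonical surjection $V^{\otimes \la} \twoheadrightarrow \schur^\la V$ factors through $V^{\otimes \la}/G^\la = \preschur^{\la^*}V$, yielding $\pi$. Equivariance is automatic since both quotients are by $\GL(V)$-stable subspaces and $\pi$ is induced by the identity on $V^{\otimes \la}$. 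The SSYT-to-SSYT statement at the level of basis vectors will then follow because the semistandard tableaux of shape $\la$ form a basis of both $\schur^\la V$ (classical) and of $\preschur^{\la^*}V/\mc{I}^\la$ (via a straightening algorithm whose rewriting rules are precisely the elements of $\mc{I}^\la$, for example Hodges' algorithm that is used later in the paper).

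The inclusion $\mc{I}^\la \subseteq \ker(\pi)$ is the easier direction. I would take a generator $\sigma_{i,i+1}(z)$ with $z \in \preschur^{\la^*+e_i-e_{i+1}}V$ and expand it via the explicit formula stated just before the proposition. After applying Grassmann relations to reorder the entries in the lengthened column $i+1$, the result matches term-for-term a Pl\"ucker relation $T - \sum_{T'\in E^i_{C}(T)} T'$ with $|C|=1$ between columns $i$ and $i+1$ of a tableau $T$ of shape $\la$. Hence $\sigma_{i,i+1}(z)$ represents an element of $Q^\la/G^\la$ and is annihilated by $\pi$.

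The reverse inclusion $\ker(\pi) \subseteq \mc{I}^\la$ is where the real work lies. It amounts to showing that every Pl\"ucker generator of $Q^\la$, for any column pair $i<j$ and any exchange subset $C$, already lies in $\mc{I}^\la$ modulo $G^\la$. My approach is a two-stage reduction: first, reduce arbitrary sizes $|C|>1$ to the case $|C|=1$ by peeling off one exchanged box at a time and iterating; then reduce single-box exchanges between non-adjacent columns $i<j$ to compositions of adjacent single-box exchanges, chaining together $\sigma_{i,i+1},\sigma_{i+1,i+2},\ldots,\sigma_{j-1,j}$ and verifying that intermediate terms cancel modulo $G^\la$. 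The main obstacle is the meticulous sign bookkeeping in this second reduction: $\sigma_{i,i+1}$ is defined through signed wedge products while Pl\"ucker relations on tableaux are stated combinatorially, and the mismatch between signed wedge expressions in $\bigwedge^{d}(V^{\oplus d})$ and ordered tableau expressions in $\preschur^{\la^*}V$ is exactly the subtle sign issue that distinguishes Schur-module computations from Olver's Weyl-module setting emphasised in the introduction. Once both inclusions are in hand, the proposition follows.
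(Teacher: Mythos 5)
The paper does not prove this proposition at all: it is quoted from Towber~\cite[Cor.~1]{towber:1979:young_symmetry}, and the rest of the paper simply \emph{uses} this presentation of the Schur module. Your proposal is therefore an attempt to reprove Towber's theorem from the definitions. The scaffolding you set up is correct: $G^\la\subseteq Q^\la$ gives the equivariant surjection $\pi\colon\preschur^{\la^*}V\twoheadrightarrow\schur^\la V$ with $\ker(\pi)=Q^\la/G^\la$, and the easy inclusion $\mc{I}^\la\subseteq\ker(\pi)$ does hold, because $\sigma_{i,i+1}$ applied to a basis tableau of $\preschur^{\la^*+e_i-e_{i+1}}V$, after sorting the enlarged column $i{+}1$ via Grassmann relations, is (up to an overall sign) exactly a $|C|=1$ Plücker relation between the adjacent columns $i$ and $i{+}1$.

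The genuine gap is the reverse inclusion $\ker(\pi)\subseteq\mc{I}^\la$, which is the whole content of Towber's Corollary~1: every Plücker generator of $Q^\la$ --- for arbitrary $|C|$ and arbitrary column pairs $i<j$ --- must be shown to lie in the submodule generated by adjacent single-box moves modulo $G^\la$. Your two-stage reduction names the right targets but carries out neither stage, and both are nontrivial. ``Peeling off one box at a time'' is not literal: a $|C|{=}2$ exchange tableau is not a composite of two $|C|{=}1$ exchange tableaux, and proving that the larger Plücker relation lies in the ideal generated by single-box ones modulo Grassmann already requires a separate inductive argument. Likewise, chaining $\sigma_{i,i+1},\dots,\sigma_{j-1,j}$ to realize a non-adjacent single-box exchange needs the commutator identities of the sort in \Cref{lemma:weyl_commutator} together with a cancellation check you have not performed. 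Finally, your justification of the SSYT-to-SSYT claim is circular: that semistandard tableaux form a basis of $\preschur^{\la^*}V/\mc{I}^\la$ (equivalently, that straightening against the rewrite rules coming from $\mc{I}^\la$ is consistent and terminating) \emph{is} Towber's theorem, and Hodges' closed formula presupposes this presentation of the Schur module rather than establishing it.
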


\begin{proposition}[{\cite[Thm.~2.5]{towber:1977:two_new_functiors_from_modules_to_algebras}}]\label{pro:schurvsweyl}
$\schur^λ V$ and $\weyl^λ V$ are isomorphic representations of $\GL(V)$.
\end{proposition}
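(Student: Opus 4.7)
The plan is to identify both quotients with the unique irreducible polynomial $\GL(V)$-representation $V_\lambda$ of highest weight $\lambda$, assuming $\ell(\lambda) \leq \dim V$ (otherwise both modules are zero, which one checks from the relations directly). First I would pin down an explicit highest weight vector in each module. In $\schur^\lambda V$, the tableau $T_\lambda$ whose $j$-th column reads $1,2,\ldots,\lambda^*_j$ from top to bottom is semistandard, hence by Proposition~\ref{pro:schurquotient} represents a nonzero element; using the Grassmann relation one checks that it is annihilated by every raising operator $E_{i,j}$ with $i<j$, and its $\GL(V)$-weight is exactly $\lambda$. In $\weyl^\lambda V$, the tableau placing the entry $i$ in every box of the $i$-th row is a highest weight vector of weight $\lambda$; the annihilation by raising operators uses the shuffle relation precisely when a raising operator brings an entry into a row whose entries are all equal.

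Next I would verify that each module is cyclically generated by its highest weight vector as a $\GL(V)$-module, and that both have formal character equal to the Schur polynomial $s_\lambda$. For $\schur^\lambda V$, the character count is immediate from the semistandard basis of Proposition~\ref{pro:schurquotient}, which gives dimension equal to the number of semistandard tableaux of shape $\lambda$ with entries in $\{1,\ldots,\dim V\}$. For $\weyl^\lambda V$, the same count is classical and follows from the straightening algorithm associated to the shuffle relation; it is proved in \cite[2.1.15]{weyman:2003:cohomology_of_vector_bundles_and_syzygies}.

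Once both quotients are known to be nonzero and of dimension equal to $\dim V_\lambda$, the submodule generated by the highest weight vector in each is isomorphic to $V_\lambda$ (any cyclic $\GL(V)$-module generated by a highest weight vector of weight $\lambda$ in which all other weights are strictly $\lambda$-dominated must contain $V_\lambda$, and the character equality then forces equality). A final dimension count closes the argument, and the isomorphism $\schur^\lambda V \simeq \weyl^\lambda V$ follows from uniqueness of the irreducible representation with given highest weight.

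The main obstacle is the verification that the constant-row tableau is genuinely a highest weight vector in $\weyl^\lambda V$: this requires a careful application of the shuffle relation to cancel the apparent contributions coming from raising operators acting on rows of equal entries, and it is precisely the kind of sign bookkeeping that the paper highlights as a distinguishing feature between the Schur and Weyl pictures. Everything else is either immediate from Proposition~\ref{pro:schurquotient} or citable from \cite{weyman:2003:cohomology_of_vector_bundles_and_syzygies}, so this sign-tracking step is where the actual work lies.
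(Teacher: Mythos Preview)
The paper does not prove this proposition; it is stated with a citation to \cite[Thm.~2.5]{towber:1977:two_new_functiors_from_modules_to_algebras} and no further argument is given. Your proposal therefore supplies something the paper itself does not attempt.

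Your highest-weight-and-character approach is correct, but the step you flag as the ``main obstacle'' is not one. Once you invoke \cite{weyman:2003:cohomology_of_vector_bundles_and_syzygies} to conclude that the semistandard tableaux form a basis of $\weyl^\lambda V$, you already know that every weight of $\weyl^\lambda V$ is $\leq \lambda$ in dominance order and that the $\lambda$-weight space is one-dimensional. Applying a raising operator $E_{i,i+1}$ to the constant-row tableau lands in the weight space for $\lambda + e_i - e_{i+1}$, which is therefore zero; no sign bookkeeping with the shuffle relation is required. In fact the explicit highest-weight vectors are unnecessary altogether: in characteristic zero the formal character determines a polynomial $\GL(V)$-representation up to isomorphism, so the equality of both characters with the Schur polynomial $s_\lambda$ --- which you already have from the two semistandard-tableau bases via Proposition~\ref{pro:schurquotient} and \cite{weyman:2003:cohomology_of_vector_bundles_and_syzygies} --- finishes the argument directly.
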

The relation between the basis vectors corresponding to semistandard tableaux in Prop.~\ref{pro:schurvsweyl} is more involved than the straightforward relationship in Prop.~\ref{pro:schurquotient}.

\section{Young Flattenings in the Schur module basis}

We give an explicit description of the construction of the so-called \emph{Pieri inclusions} defined on the basis of Schur modules.
Olver \cite{olver:1982:differential_hyperform} first described the corresponding construction based on Weyl modules and we closely mimic this construction while taking care of the subtle signs that are introduced when using the Schur module basis.
To the best of our knowledge, this construction has never been explicitly described for Schur modules.
This algorithm will directly give the construction for Young flattenings.

\begin{figure}
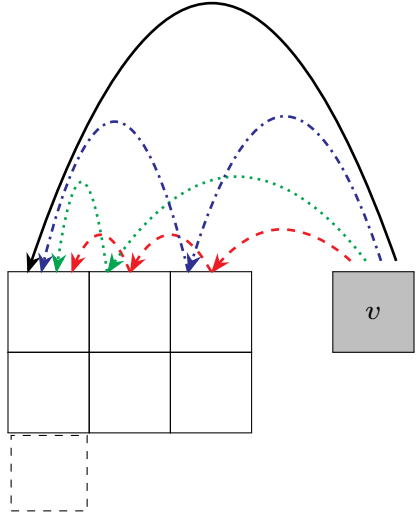

	\ytableausetup{centertableaux, mathmode, boxsize=3em}
	\ \\
	$$
	\begin{ytableau}
	\none \\
	\none \\
	\none \\
	\tikznode{b4}{\ }  & \tikznode{b3}{\ } & \tikznode{b2}{\ } & \none &*(gray!50) \tikznode{b1}{\scalebox{1.5}{$v$} } \\
	\ & \ &  \ \\
	\none[\begin{minipage}{1cm}\tikz{\draw[dashed] (0,0)--(1,0)--(1,1)--(0,1)--cycle;}
	      \end{minipage}]
	\end{ytableau}
	$$
	\tikz[overlay,remember picture]{%
		
		\draw[line width = 1, -Stealth, rotate=0] ([xshift=3mm,yshift=6mm]b1.north) parabola [parabola height=3.5cm] ([xshift=-2.8mm, yshift=6mm]b4);
		
		\draw[line width = 1, -Stealth, rotate=0, dashed, color=Red] ([xshift=-3mm,yshift=6mm]b1.north) parabola [parabola height=0.5cm] ([yshift=6mm]b2);
		\draw[line width = 1, -Stealth, rotate=0, dashed, color=Red] ([yshift=6mm]b2.north) parabola [parabola height=0.5cm] ([yshift=6mm]b3);
		\draw[line width = 1, -Stealth, rotate=0, dashed, color=Red] ([yshift=6mm]b3.north) parabola [parabola height=.5cm] ([xshift=3mm, yshift=6mm]b4);
		
		\draw[line width = 1, -Stealth, rotate=0, dotted, color=Green] ([xshift=-1mm,yshift=6mm]b1.north) parabola [parabola height=1.2cm] ([xshift=-3mm, yshift=6mm]b3);
		\draw[line width = 1, -Stealth, rotate=0, dotted, color=Green] ([xshift=-3mm, yshift=6mm]b3.north) parabola [parabola height=1.2cm] ([xshift=1mm, yshift=6mm]b4);
		
		\draw[line width = 1, -Stealth, rotate=0, dashdotted, color=Blue] ([xshift=1mm, yshift=6mm]b1.north) parabola [parabola height=2.0cm] ([xshift=-3mm, yshift=6mm]b2);
		\draw[line width = 1, -Stealth, rotate=0, dashdotted, color=Blue] ([xshift=-3mm, yshift=6mm]b2.north) parabola [parabola height=2.0cm] ([xshift=-1mm, yshift=6mm]b4);
	}
	\caption{The paths we need to traverse, when considering the Pieri inclusion from $(3,3)$ to $(3,3,1)$. Note that the boxes are moved in the direction of the arrows, but that the box movements are executed on each path starting with the \emph{leftmost} arrow.}
    \label{pic:idea}
\end{figure}
\ytableausetup{mathmode, boxsize=1em, centertableaux, smalltableaux}

Pieri's well-known formula states the following isomorphism of $\GL(V)$-representations:
$$\schur^d V \otimes \schur^{λ} V = \bigotimes_{\mu \vdash d+|\la|\atop μ/λ \in \ghs} \schur^μV.$$
The resulting $\GL(V)$-equivariant inclusions $$\varphi_{λ,μ}: \schur^dV \otimes \schur^λV \rightarrow \schur^μV$$ are called \emph{Pieri inclusions} and are unique up to scale by Schur's lemma.
We define $\varphi_{λ,μ}$ by composing Pieri inclusions for $d=1$ as follows.
For $\mu/\la \in \ghs$ let
$(\la=\la^{(0)},\la^{(1)},\la^{(2)},\ldots,\la^{(d)}=\mu)$ be the sequence of partitions obtained by adding one box at a time to $\la$ from left to right, so that after having added $d$ boxes we arrive at $\mu$.
Then define the $\GL(V)$-equivariant map $\psi_{\la,\mu}:V^{\otimes d}\otimes \schur^\la \to \schur^\mu$ via
\begin{equation}\label{eq:defpsi}
\psi_{\la,\mu} = \varphi_{\la^{(d-1)},\la^{(d)}}
\circ (\text{id}_{V}\otimes\varphi_{\la^{(d-2)},\la^{(d-1)}})
\circ \cdots \circ
(\text{id}_{V^{\otimes (d-2)}}\otimes\varphi_{\la^{(1)},\la^{(2)}})
\circ
(\text{id}_{V^{\otimes (d-1)}}\otimes\varphi_{\la^{(0)},\la^{(1)}}).
\end{equation}
By proving that the restriction of $\psi_{\la,\mu}$ to $\schur^d V \otimes \schur^\la V$ is nonzero (see Lemma~\ref{lem:flattening_non_zero}), it immediately follows that this restriction equals $\varphi_{\la,\mu}$ (up to a nonzero scalar).
It remains to describe $\varphi_{\la,\mu}$ for which $\mu/\la$ has only a single box and then prove Lemma~\ref{lem:flattening_non_zero}.

\subsection{The single box case}
We assume that $\mu/\la$ consists of a single box.

We will define the linear map $\zeta_{\la,\mu}: V \otimes \preschur^{\la^*} V \to \preschur^{\mu^*} V$,
show its $\GL(V)$-equivariance (\Cref{lem:equivariant}),
and then prove that it is well-defined on the quotient space $V \otimes \schur^\la V$ if we interpret its image in the quotient space $\schur^\mu V$ (\Cref{olver:thm}).
In this way, $\zeta_{\la,\mu}$ induces a map $\varphi_{\la,\mu}: V \otimes \schur^\la V \to \schur^\mu V$. By the uniqueness of the Pieri inclusions we have that this map equals $\varphi_{\la,\mu}$ or is the zero map (nonzeroness is proved in \Cref{lem:flattening_non_zero}).
Note that $V \otimes \preschur^{\la^*} V \simeq \preschur^{(\la^*,1)} V$ by definition.

Let $J = (J_1, \dots, J_p)$ be a finite sequence of positive integers. Define the linear map
\begin{equation}\label{def:sigmaJ}
\sigma_J: Y_d  \to Y_d ,\quad
\sigma_J \coloneqq \sigma_{J_1, J_2} \circ \sigma_{J_2, J_3} \circ \dots \circ \sigma_{J_{p-1}, J_p}.
\end{equation}
Pictorially, this means that a box is shifted from column $J_{p-1}$ to $J_p$, then from $J_{p-2}$ to $J_{p-1}$, and so on.
Clearly $\sigma_J$ is $\GL(V)$-equivariant as the composition of $\GL(V)$-equivariant maps.
Note that if $v \in \preschur^{(\la^*,1)}V$ and $\sigma_J$ moves a box from an empty column to another column, then $\sigma_J(v)=0$.

We consider the set of all strongly decreasing sequences of natural numbers from $m$ to $k$, which we denote by 
\[
\mathcal{A}^m_k := \{ J = (J_1, \dots, J_p) \mid p\in \NN, k = J_p < J_{p-1} < \dots < J_1 = m \}.
\]
For $\iota\geq k$ let $h_{k,\iota}:=\la^*_k-\la^*_\iota+\iota-k+1$ be the hook length in $\la$ of the box in column $k$ and row~$\iota$.
Let $k$ be the column where $\mu$ and $\la$ differ.
We define $D_J(\la^*)$ as the product of all hook lengths in $J$ with respect to column $k$, i.e., 
\begin{equation}
\label{eq:DJ}
D_J(\la^*) \coloneqq \prod_{q = 2}^{\abs{J}-1} h_{k,J_q}(\la^*) =  \prod_{q = 2}^{\abs{J} - 1} (\la^*_k - \la^*_{J_q} + J_q - k + 1).
\end{equation}
Finally, we define  $\zeta^μ_λ :Y_d \to Y_d$ by 

\[\zeta^μ_λ = \sum_{J \in \mathcal{A}^{λ_1+1}_k} \frac{\sigma_J}{D_J(λ^*)}.
\]
The map $\zeta^μ_λ$ maps $\preschur^{(\la^*,1)} V$ into $\preschur^{\mu^*} V$.

\begin{lemma}\label{lem:equivariant}
$\zeta^μ_λ: \preschur^{(\la^*,1)} V  \rightarrow \preschur^{\mu^*} V$ is $\GL(V)$-equivariant.
\end{lemma}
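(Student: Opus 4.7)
The statement is essentially bookkeeping on top of machinery already assembled in the excerpt, so my plan is short and the ``hard part'' is really just verifying that the denominators are nonzero. Concretely, $\zeta^\mu_\lambda$ is, by definition, a scalar linear combination of the maps $\sigma_J$ with $J \in \mathcal{A}^{\lambda_1+1}_k$ and scalar coefficients $1/D_J(\lambda^*)$. The plan is to invoke the following two facts and combine them: (i) each $\sigma_J$ is $\GL(V)$-equivariant, and (ii) each $D_J(\lambda^*)$ is a nonzero scalar, so that the coefficient $1/D_J(\lambda^*)$ is a well-defined element of the ground field. Since the space of $\GL(V)$-equivariant linear maps between two fixed $\GL(V)$-representations is closed under scalar multiplication and sum, (i) and (ii) together imply the lemma.

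For (i), the excerpt already records that the single-step map $\sigma_{i,j}$, obtained by applying the Lie algebra element $\mathfrak{g}_{i,j} \in \mathfrak{gl}(V^{\oplus d})$, is $\GL(V)$-equivariant: the $\GL(V)$-action on $\bigwedge^{d}(V^{\oplus d})$ is induced by the block-diagonal embedding $g \mapsto \mathrm{id}_d\otimes g$, and the block matrix $\mathfrak{g}_{i,j}$ (identity in block $(i,j)$, zero elsewhere) commutes with every such block-diagonal matrix. Hence the composition
\[
\sigma_J = \sigma_{J_1,J_2}\circ \sigma_{J_2,J_3}\circ\cdots\circ\sigma_{J_{p-1},J_p}
\]
is $\GL(V)$-equivariant as a composition of $\GL(V)$-equivariant maps. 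I would simply quote this from the discussion just before \eqref{def:sigmaJ}.

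For (ii), fix $J \in \mathcal{A}^{\lambda_1+1}_k$, a strictly decreasing sequence $\lambda_1+1=J_1>J_2>\cdots>J_p=k$. Each interior index $J_q$ (with $2\le q\le |J|-1$) therefore satisfies $k<J_q\le \lambda_1$, so $J_q-k+1\ge 2$. Moreover $\lambda^*$ is weakly decreasing in the column index, and $J_q>k$ forces $\lambda^*_k - \lambda^*_{J_q}\ge 0$. Consequently each factor $h_{k,J_q}(\lambda^*) = \lambda^*_k - \lambda^*_{J_q} + J_q - k + 1$ appearing in the product \eqref{eq:DJ} is at least $2$, so $D_J(\lambda^*)$ is a positive integer. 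This confirms that the coefficients $1/D_J(\lambda^*)$ are well defined.

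Combining (i) and (ii), $\zeta^\mu_\lambda = \sum_J D_J(\lambda^*)^{-1}\sigma_J$ is a well-defined scalar linear combination of $\GL(V)$-equivariant maps, hence $\GL(V)$-equivariant. No genuine obstacle arises; the only point requiring a brief calculation is the positivity of the hook-length products, and the only conceptual input is the observation about block matrices commuting with block-diagonal ones, both of which are already essentially recorded earlier in the paper.
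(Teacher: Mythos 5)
Your proof is correct and matches the paper's approach: the paper's entire proof is the one-line observation that $\zeta^\mu_\lambda$ is a linear combination of $\GL(V)$-equivariant maps, relying on the equivariance of each $\sigma_J$ recorded just after \eqref{def:sigmaJ}. Your additional check that each $D_J(\lambda^*)$ is a positive integer (so the coefficients are well defined) is a worthwhile detail the paper leaves implicit.
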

\begin{proof}
$\zeta^μ_λ$ is a linear combination of $\GL(V)$-equivariant maps.
\end{proof}

\begin{theorem}
\label{olver:thm}
Let $λ,μ$ be partitions with $μ$ being obtained from λ by appending a single box in row~$k$.
Then $\zeta_λ^μ(V \otimes \mc{I}_λ) \subseteq \mc{I}_μ$ and hence $\varphi_{\la,\mu}$ is well-defined.
\end{theorem}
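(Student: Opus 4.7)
The plan is to adapt Olver's strategy from the Weyl-module setting \cite{olver:1982:differential_hyperform} to the Grassmann-based $\preschur$-construction. Let $n := \la_1$, so that $V \otimes \preschur^{\la^*}V \cong \preschur^{(\la^*,1)}V$, with the extra column at index $n+1$. By the $\GL(V)$-equivariance of $\zeta^\mu_\la$ (Lemma~\ref{lem:equivariant}) and the description \eqref{eq:ideal} of $\mc{I}^\la$ as a $\GL(V)$-submodule, it suffices to check containment on generators. That is, for every $i$ with $\la^*_{i+1} > 0$ and every $y \in \preschur^{(\la^*+e_i-e_{i+1},1)}V$, I must show
\[
\zeta^\mu_\la\bigl(\sigma_{i,i+1}(y)\bigr) \in \mc{I}^\mu,
\]
using that $\sigma_{i,i+1}$ acts only on the first $n$ columns and therefore commutes with the identification $V\otimes(\,\cdot\,)\hookrightarrow\preschur^{(\cdot,1)}V$.

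The main step is to rewrite $\zeta^\mu_\la\circ\sigma_{i,i+1} = \sigma_{i,i+1}\circ\zeta^\mu_\la + [\zeta^\mu_\la,\sigma_{i,i+1}]$. For the first summand, I track the shapes through each $\sigma_J$ to conclude that $\zeta^\mu_\la(y)$ lies in $\preschur^{\mu^*+e_i-e_{i+1}}V$; since $\mu^*_{i+1}=\la^*_{i+1}+\delta_{i+1,k}>0$, applying $\sigma_{i,i+1}$ produces an element of $\sigma_{i,i+1}\bigl(\preschur^{\mu^*+e_i-e_{i+1}}V\bigr)\subseteq\mc{I}^\mu$. The remaining task is to show that the commutator contribution $[\zeta^\mu_\la,\sigma_{i,i+1}](y)$ also lies in $\mc{I}^\mu$.

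To handle the commutator, I expand $\sigma_J = \sigma_{J_1,J_2}\circ\cdots\circ\sigma_{J_{p-1},J_p}$ and use the $\gl$-bracket identity $[\sigma_{a,b},\sigma_{c,d}] = \delta_{bc}\,\sigma_{a,d} - \delta_{ad}\,\sigma_{c,b}$ inherited from the Lie algebra action on $\bigwedge^d(V^{\oplus d})$. This rewrites $[\sigma_J,\sigma_{i,i+1}]$ as a telescoping sum in which one factor $\sigma_{J_q,J_{q+1}}$ is replaced by $\sigma_{J_q,i+1}$ (when $J_{q+1}=i$), by $-\sigma_{i,J_{q+1}}$ (when $J_q=i+1$), or by a diagonal operator (in the coincident case), producing a modified decreasing sequence $J'$ or a scalar multiple of $\sigma_{J\setminus\{J_q\}}$. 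Grouping terms by their resulting index sequence and using the recursion $D_J(\la^*) = h_{k,J_q}(\la^*)\,D_{J\setminus\{J_q\}}(\la^*)$ for hook lengths, the plan is to verify the telescoping coefficient identity that collects all contributions to a given $J'$ and shows they either vanish or combine into an operator that factors through some $\sigma_{j,j+1}$, hence lies in $\mc{I}^\mu$.

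The principal obstacle, and the main new difficulty compared with \cite{olver:1982:differential_hyperform}, is the sign bookkeeping introduced by the Grassmann relation. Each application of $\sigma_{a,b}$ carries explicit signs (the $(-1)^{a+\alpha_i+\alpha_j}$-type factors visible in the formulas for $\sigma_{i,k}$ and $\sigma_{k,i}$ in Section~2) that are absent from Olver's Weyl-basis computations. These signs must be propagated through every commutator substitution and then shown to be compatible with the parity of the block permutation required to reorganize a tensor of shape $(\la^*+e_i-e_{i+1},1)$ into one of shape $\mu^*$. Once these signs are verified to cohere with the hook-length recursion, the telescoping identity reduces to the one in Olver's original argument and the theorem follows.
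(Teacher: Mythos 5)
Your outline follows the same skeleton as the paper's proof: reduce via \eqref{eq:ideal} to generators $w=\sigma_{i,i+1}(v)$ with $v\in\preschur^{(\la^*+e_i-e_{i+1},1)}V$, split $\zeta^\mu_\la\circ\sigma_{i,i+1}$ into $\sigma_{i,i+1}\circ\zeta^\mu_\la$ plus a commutator, handle the first term by tracking shapes (this part of your argument is fine, since each $\sigma_J$ with $J\in\mathcal A^{\la_1+1}_k$ sends $\preschur^{(\nu,1)}V$ into $\preschur^{\nu+e_k}V$ and $\mu^*_{i+1}\geq\la^*_{i+1}>0$), and kill the commutator by expanding each $\sigma_J$, applying elementary bracket relations, grouping the sequences $J$, and cancelling coefficients via $D_J(\la^*)=h_{k,J_q}(\la^*)\,D_{J\setminus\{J_q\}}(\la^*)$. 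The paper in fact shows the commutator term is identically zero on $\preschur^{(\la^*+e_i-e_{i+1},1)}V$, which is cleaner than your weaker goal of landing in $\mc{I}^\mu$ (a goal that would itself require checking that whatever survives is $\sigma_{j,j+1}$ applied to something in $\preschur^{\mu^*+e_j-e_{j+1}}V$ with $\mu^*_{j+1}>0$ --- not automatic).

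The gap is that both steps carrying the actual content are deferred rather than carried out. First, the bracket relations for the $\sigma$'s on $\preschur^\alpha V$ are precisely what must be \emph{proved} here, not quoted: the paper's Lemma~\ref{lemma:weyl_commutator} is established by an explicit computation with the Grassmann signs, its diagonal case reads $[\sigma_{i,j},\sigma_{j,i}]=(\alpha_j-\alpha_i)\,\mathrm{id}$ --- opposite in sign to Olver's Weyl-module version --- and the formula you commit to, $[\sigma_{a,b},\sigma_{c,d}]=\delta_{bc}\sigma_{a,d}-\delta_{ad}\sigma_{c,b}$, has all three nonzero cases sign-reversed under the operative convention that $\sigma_{i,j}$ moves a box \emph{from} column $i$ \emph{to} column $j$ (it is the bracket for the transposed matrix units). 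A uniform sign flip would not by itself destroy the vanishing, but it shows the relations were not checked against the explicit signed formulas for $\sigma_{i,k}$ and $\sigma_{k,i}$, and reconciling exactly these signs with the hook-length recursion is the substance of the theorem; you name this as ``the principal obstacle'' and then assume it away. You also omit the degenerate cases where an intermediate column length is $0$ or $1$ (Lemma~\ref{lemma:weyl_commutator} assumes $\alpha_i,\alpha_k\neq0$; the paper needs \eqref{eq:specialzero} and \eqref{eq:specialzeroII} and separate treatments of $\la^*_{k+1}=1$ and $\la^*_{i+1}=1$). Second, the cancellation itself is only asserted: the case analysis $i<k$ (all brackets vanish), $i=k$ (sequences pair as $(B,k)$ and $(B,k+1,k)$, with $-D_{A_1}^{-1}+D_{A_2}^{-1}h_{k,k+1}(\la^*)=0$), and $i>k$ (sequences group in triples, with a three-term coefficient identity) must actually be run, and it does not literally ``reduce to the one in Olver's original argument'' because the reversed diagonal sign changes the intermediate expressions. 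As it stands this is a plausible plan, not a proof.
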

Before proving Theorem~\ref{olver:thm} we first have to prove the following lemma.

We denote by $[f, g] := f g - g f$ the commutator of two linear maps $f$ and $g$.

\begin{lemma}
\label{lemma:weyl_commutator}

Let $v \in \preschur^\alpha V$ for some column lengths $\alpha$ with $\alpha_i\neq 0$ and $\alpha_k\neq 0$ and let $i \neq j, k \neq l \in \NN$. Then,
\begin{align*}
[\sigma_{i ,j}, \sigma_{k,l}] (v) = 
\begin{cases}
(\alpha_j - \alpha_i) v & \text{if } i = l,j = k,\\
 \sigma_{k,j} (v) & \text{if } i = l, j \neq k,\\
-\sigma_{i,l} (v) & \text{if } i \neq l, j = k,\\
0 & \text{otherwise.}
\end{cases}
\end{align*}
\end{lemma}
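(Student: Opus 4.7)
My approach is to exploit that each $\sigma_{i,j}$ is, by its very definition, the derivation induced by a matrix $\mathfrak{g}_{i,j} \in \mathfrak{gl}(V^{\oplus d})$ acting on $Y_d \simeq \bigwedge^d(V^{\oplus d})$ via the canonical isomorphism~\eqref{eq:naturalisomgrassmannian}. Since this is a genuine Lie algebra representation of $\mathfrak{gl}(V^{\oplus d})$, commutators pass through automatically: $[\sigma_{i,j},\sigma_{k,l}] = \sigma_{[\mathfrak{g}_{i,j},\mathfrak{g}_{k,l}]}$, where on the right-hand side we allow ourselves to also denote the action of diagonal matrices by $\sigma$. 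Hence the problem reduces to computing a single matrix commutator in $\mathfrak{gl}(V^{\oplus d})$ and identifying how the resulting matrix acts on $\preschur^\alpha V$.

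The matrix step is the standard identity for elementary matrices: writing $\mathfrak{g}_{i,j} = E_{ij} \otimes \mathrm{id}_V$, one has $[E_{ij}, E_{kl}] = \delta_{jk} E_{il} - \delta_{li} E_{kj}$, still tensored with $\mathrm{id}_V$. This immediately produces the four cases of the lemma: if $j\neq k$ and $i\neq l$ the commutator vanishes, giving $0$; if exactly one of $j=k$ or $i=l$ holds, exactly one off-diagonal matrix unit survives, which by construction re-expresses as $\pm \sigma_{?,?}$ with the indices and sign matching the right-hand side of the lemma.

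The only case requiring an extra observation is $i=l$ together with $j=k$, where the commutator becomes the diagonal matrix $(E_{ii} - E_{jj}) \otimes \mathrm{id}_V$ (or its negative, depending on the convention). The claim is that $E_{aa} \otimes \mathrm{id}_V$ acts on $\preschur^\alpha V$ as multiplication by $\alpha_a$. This follows directly from the Leibniz rule for the derivation action on $\bigwedge^d(V^{\oplus d})$: applied to a wedge product $w_1 \wedge \cdots \wedge w_{|\alpha|}$, the derivation picks up one summand per tensor position, and $E_{aa}\otimes\mathrm{id}_V$ is the identity on the $\alpha_a$ positions lying in block $a$ and zero on the rest, yielding the scalar $\alpha_a$. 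Subtracting then produces the coefficient $\alpha_j - \alpha_i$ claimed in the lemma.

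The main thing to watch for is reconciling the clean Lie-algebraic argument with the explicit wedge-product formulas for $\sigma_{i,k}$ and $\sigma_{k,i}$ stated just before \Cref{lem:equivariant}, which visibly carry signs $(-1)^{a+\alpha_i+\alpha_j}$ coming from moving a tensor factor across the other wedge blocks. A purely combinatorial proof directly from those formulas is certainly possible but forces one to track these signs across all four cases, in particular for the diagonal case where two such applications must conspire to produce the clean scalar $\alpha_j - \alpha_i$; the Lie-algebraic route sketched here bypasses the sign bookkeeping by doing the combinatorics once at the matrix level and invoking that $\mathfrak{gl}(V^{\oplus d}) \to \mathrm{End}(\bigwedge^d (V^{\oplus d}))$ is a representation.
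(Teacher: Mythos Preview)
Your approach is correct and is a genuinely different route from the paper's. The paper proves the lemma by a direct case-by-case computation using the explicit wedge-product formulas for $\sigma_{i,j}$ displayed just before \Cref{lem:equivariant}, tracking the signs $(-1)^{a+\alpha_i+\alpha_j}$ and the like through several lines of algebra in each of the four cases (including separate sub-cases $j=l$ and $i=k$ within the ``otherwise'' branch). Your route---recognizing that $\sigma_{i,j}$ is by construction the derivation induced by a block matrix unit of $\mathfrak{gl}(V^{\oplus d})$ on $\bigwedge^d(V^{\oplus d})$, so that the commutator of the $\sigma$'s is governed by the elementary-matrix identity $[E_{ab},E_{cd}]=\delta_{bc}E_{ad}-\delta_{da}E_{cb}$, together with the observation that $E_{aa}\otimes\mathrm{id}_V$ acts on $\preschur^\alpha V$ by the scalar~$\alpha_a$---is shorter and more conceptual. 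What the paper's hands-on computation buys is that it stays entirely at the level of the explicit formulas, without appealing to the fact that the derivation action is a Lie algebra homomorphism.

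One point you should tighten: with your stated identification $\mathfrak g_{i,j}=E_{ij}\otimes\mathrm{id}_V$, the signs come out \emph{opposite} to the lemma in all three nonzero cases (e.g.\ $j=k$, $i\neq l$ gives $+E_{il}$, whereas the lemma claims $-\sigma_{i,l}$; and $i=l$, $j=k$ gives $E_{ii}-E_{jj}$, acting by $\alpha_i-\alpha_j$). The explicit formula for $\sigma_{i,k}(x\otimes y\otimes z)$ shows that $\sigma_{i,j}$ sends a wedge factor $x_{*,i}$ to $x_{*,j}$, i.e.\ it is the derivation induced by $E_{ji}\otimes\mathrm{id}_V$ rather than $E_{ij}\otimes\mathrm{id}_V$; with that correction all cases match on the nose, including the diagonal one where $[E_{ji},E_{ij}]=E_{jj}-E_{ii}$ acts by $\alpha_j-\alpha_i$. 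This is a bookkeeping fix, not a flaw in the method, but your claim that ``the indices and sign match the right-hand side'' needs this adjustment to be correct.
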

Note the similarity to Lemma 5.4 in \cite{olver:1982:differential_hyperform} with the exception that the sign in the first case is reversed. Moreover, \cite{olver:1982:differential_hyperform} ignores handling the special case when column lengths vanish. We handle these cases explicitly. If $\la^*_{j}=0$, $\la^*_{i}\neq 0$ and $v \in \preschur^{\la^*}$, then
\begin{equation}\label{eq:specialzero}
\sigma_{j,i}\sigma_{i,j}(v) = \la^*_{i} v.
\end{equation}
Moreover, if 
$\la^*_{j}=0$, $\la^*_{i}\neq 0$, $i\neq k$ and $v \in \preschur^{\la^*}$, then
\begin{equation}\label{eq:specialzeroII}
\sigma_{j,k}\sigma_{i,j}(v) = \sigma_{i,k}(v).
\end{equation}

\begin{proof}[Proof of Lemma~\ref{lemma:weyl_commutator}]
We focus on the key positions in the tensor.
\begin{itemize}
\item $i = l, j = k$: $x\in\bigwedge^{\alpha_i}V$, $y\in\bigwedge^{\alpha_j}V$.
\begin{align*}
[σ_{i,j} , σ_{j, i}](x \otimes y) &= σ_{i,j} \Big(\sum_{1 \leq b \leq \alpha_j} (-1)^{b+1} (x\wedge y_b) \otimes y_{-b} \Big)  -  σ_{j, i}  \Big(\sum_{1 \leq a \leq \alpha_i} (-1)^{a+\alpha_i} x_{-a} \otimes (x_a \wedge y)\Big)
\\
&= \sum_{1 \leq a \leq \alpha_i + 1 \atop 1 \leq b \leq \alpha_j}(-1)^{a+b+\alpha_i} (x\wedge y_b)_{-a} \otimes ((x \wedge y_b)_{a}\wedge y_{-b} )  \\
&\phantom{= }- \sum_{1 \leq a \leq \alpha_i \atop 1 \leq b \leq \alpha_j} (-1)^{a+b+1+\alpha_i} ( x_{-a}\wedge (x_a \wedge y)_b ) \otimes (x_a \wedge y)_{-b}
\\
&= \sum_{1 \leq a \leq \alpha_i \atop 1 \leq b \leq \alpha_j} (-1)^{a+b+\alpha_i} (x_{-a} \wedge y_b) \otimes (x_a \wedge y_{-b}) \\
&\phantom{= }- \sum_{2 \leq b \leq \alpha_{j}+1 \atop 1 \leq a \leq \alpha_i} (-1)^{a+b+\alpha_{i}+1} (x_{-a} \wedge y_{b-1}) \otimes (x_a \wedge y_{-(b-1)}) \\
&\phantom{= }+ \sum_{1 \leq b \leq \alpha_j} (-1)^{b+1} x \otimes (y_b \wedge y_{-b})\\
&\phantom{= }- \sum_{1 \leq a \leq \alpha_i} (-1)^{a+\alpha_i} (x_{-a} \wedge x_a) \otimes y \\
&= \sum_{1\leq b \leq \alpha_j} x \otimes y - \sum_{1 \leq a \leq \alpha_i} x \otimes y = (\alpha_j - \alpha_i) (x \otimes  y)
\end{align*}
\item $i = l, j \neq k$: $x\in\bigwedge^{\alpha_i}V$, $y\in\bigwedge^{\alpha_j}V$, $z\in\bigwedge^{\alpha_K}V$.
\begin{align*}
[σ_{i,j} , σ_{k, i}](x \otimes y \otimes z) &= σ_{i,j} \Big(\sum_{1 \leq c \leq \alpha_k} (-1)^{\alpha_j+c+1} (x\wedge z_c) \otimes y \otimes z_{-c}\Big) \\
&\phantom{=}-  σ_{k, i}  \Big(\sum_{1 \leq a \leq \alpha_i} (-1)^{\alpha_i+a} x_{-a} \otimes (x_a \wedge y) \otimes z\Big) \\
&=\sum_{1 \leq c \leq \alpha_k \atop 1 \leq a \leq \alpha_i+1} (-1)^{\alpha_i+\alpha_j+a+c} (x\wedge z_c)_{-a} \otimes ((x\wedge z_c)_{a}) \wedge y  \otimes z_{-c} \\
&\phantom{=}  -\sum_{1 \leq a \leq \alpha_i \atop 1 \leq c \leq \alpha_k} (-1)^{a+c+\alpha_i+\alpha_j} (x_{-a}\wedge z_c) \otimes (x_a \wedge y) \otimes z_{-c})\\
&= \sum_{1 \leq a \leq \alpha_i \atop 1 \leq c \leq \alpha_k} (-1)^{\alpha_i+\alpha_j+a+c} (x_{-a}\wedge z_c) \otimes (x_a \wedge y) \otimes z_{-c} )  \\
&\phantom{=}  -\sum_{1 \leq a \leq \alpha_i \atop 1 \leq c \leq \alpha_k} (-1)^{\alpha_i+\alpha_j+a+c} (x_{-a}\wedge z_c) \otimes (x_a \wedge y) \otimes z_{-c} )\\
&\phantom{=} + \sum_{1 \leq c \leq \alpha_k} (-1)^{\alpha_j + c +1} x \otimes (z_c \wedge y) \otimes z_{-c}
\\
&= \sum_{1 \leq c \leq \alpha_k} (-1)^{c +1} x \otimes (y \wedge z_c) \otimes z_{-c}
\\
&= σ_{k, j} (x\otimes y \otimes z)
\end{align*}
\item $i \neq l, j = k$:
Equivalent to the case $i = l, j \neq k$ but changing the order of elements in the commutator. Thus, the sign changes:
\begin{align*}
        [σ_{i,j} , σ_{j, l}](x \otimes y \otimes z)  = - [σ_{j,l} , σ_{i, j}](x \otimes y \otimes z) = -σ_{i,l} (x\otimes y \otimes z)
    \end{align*}
\item $i \neq l, j \neq k$:
If all $i,j,k,l$ are pairwise distinct, then both maps affect distinct columns and hence they commute.
We first treat the case $j=l \notin \{i,k\}$, $i \neq k$.
\begin{align*}
[σ_{i,k} , σ_{j,k}](x \otimes y \otimes z) &= σ_{i,k} \Big( \sum_{1 \leq b \leq \alpha_j}(-1)^{\alpha_j+b} x \otimes y_{-b}\otimes(y_b\wedge z) \Big) \\
&\phantom{=}- σ_{j,k}\Big(\sum_{1\leq a \leq \alpha_i}(-1)^{\alpha_i+\alpha_j+a}x_{-a}\otimes y \otimes (x_a\wedge z)\Big) \\
&= \sum_{1 \leq a \leq \alpha_i \atop 1 \leq b \leq \alpha_j} (-1)^{\alpha_j+b+a+\alpha_i+\alpha_j+1} x_{-a}\otimes y_{-b} \otimes (x_a \wedge y_b \wedge z) \\
&\phantom{=}-\sum_{1 \leq a \leq \alpha_i \atop 1 \leq b \leq \alpha_j}(-1)^{\alpha_j+b+a+\alpha_i+\alpha_j} x_{-a}\otimes y_{-b} \otimes (y_b \wedge x_a \wedge z)
\\ &= 0
\end{align*}
We now treat the remaining case $i=k \notin \{j,l\}$, $j \neq l$. For a basis vector $x$ let $x_{-\{a,b\}}$ denote the basis vector with positions $a$ and $b$ removed.
\begin{align*}
[σ_{i,j} , σ_{i,l}](x \otimes y \otimes z) &= σ_{i,j} \Big(\sum_{1 \leq a \leq \alpha_i} (-1)^{\alpha_j+a+\alpha_i} x_{-a} \otimes y \otimes (x_a\wedge z)\Big) \\
&\phantom{=}-  σ_{i,l}  \Big(\sum_{1 \leq a \leq \alpha_i} (-1)^{\alpha_i+a} x_{-a} \otimes (x_a \wedge y) \otimes z\Big) \\
&=\sum_{1 \leq a \leq \alpha_i \atop 1 \leq a' \leq \alpha_i-1} (-1)^{a+a'+\alpha_j+1} (x_{-a})_{-a'} \otimes ((x_{-a})_{a'} \wedge y) \otimes (x_a \wedge z) \\
&\phantom{=}  -\sum_{1 \leq a \leq \alpha_i \atop 1 \leq a' \leq \alpha_i-1} (-1)^{a+a'+\alpha_j} (x_{-a})_{-a'} \otimes (x_a \wedge y) \otimes ((x_{-a})_{a'}\wedge z)\\
&= \sum_{1 \leq a,\tilde a \leq \alpha_i \atop a \neq \tilde a} (-1)^{a+\tilde a +\alpha_j +1 + [\tilde a > a]} x_{-\{a,\tilde a\}} \otimes (x_{\tilde a} \wedge y) \otimes (x_a \wedge z)  \\
&\phantom{=}  -\sum_{1 \leq a,\tilde a \leq \alpha_i \atop a \neq \tilde a} (-1)^{a+\tilde a + \alpha_j + [\tilde a > a]} x_{-\{a,\tilde a\}} \otimes (x_a \wedge y) \otimes (x_{\tilde a}\wedge z)\\
&= 0
\end{align*}
where $[b>a]$ is 1 if $b>a$ and 0 otherwise. Here we used the notation
\[
\tilde a = \begin{cases}
a' &\text{ if } a'<a \\
a'+1 &\text{ if } a'\geq a
\end{cases}.
\]
Note that the second case happens exactly when $\tilde a > a$.\qedhere
\end{itemize}
\end{proof}

We will make heavy use of the following identity:
Let $σ_1,σ_2, σ_3$ be linear maps, then
\begin{equation}\label{eq:Leibniz}
[σ_1 \circ σ_2, σ_3] = σ_1 \circ [σ_2, σ_3]  + [σ_1, σ_3] \circ σ_2.
\end{equation}
The rule can be interpreted as the Leibniz rule for $ad_{A}(B) = [A, B]$. 

\begin{proof}[Proof of Theorem~\ref{olver:thm}]
    Using \eqref{eq:ideal} we see that it suffices to prove that if
$w \in \sigma_{i,i+1}(\preschur^{(\la^*+e_i-e_{i+1},1)})$, then $\zeta_{\la,\mu}(w) \in \sigma_{i,i+1}(\preschur^{\mu^*+e_i-e_{i+1}})$. Let $w = \sigma_{i,i+1}(v)$ with $v \in\preschur^{\la^*+e_i-e_{i+1}}$.
We now show that $\zeta_{\la,\mu}(\sigma_{i,i+1}(v))=\sigma_{i,i+1}(\zeta_{\la,\mu}(v))$, which finishes the proof.

We split the proof according to the different relations of $i$ and $k$: 
\begin{itemize}
\item $i < k$: 
If $i+1 < k$, then clearly $[σ_A, σ_{i, i + 1}] = 0$ for every $A \in \mathcal{A}_k^{\la_1+1}$, because $A \cap \{i,i+1\}=\emptyset$.
Consider the case $i+1 = k$. Every $A \in \mathcal{A}_k^{\la_1+1}$ can be written as $(B,i+1)$ with $B \in \mathcal{A}^{\la_1+1}_m$ for some $m>i+1$.
\[
[\sigma_{A},\sigma_{i,i+1}] \stackrel{B\cap\{i,i+1\}=\emptyset}{=} \sigma_{B}[\sigma_{m,i+1},\sigma_{i,i+1}] \stackrel{\text{\Cref{lemma:weyl_commutator}}}{=} 0.
\]
\item $i = k$:
We divide the sequences in $\mathcal{A}_k^{λ_1 + 1}$ (summed up over in $\zeta_λ^μ$) as follows: For every  $m > k + 1$ and $B \in \mathcal{A}_m^{\la_1+1}$, let either $A_2 = (B, k+1, k)$ or $ A_1 = (B, k)$.
In fact, the sequences come in pairs. Adding/removing the entry $k+1$ maps the elements of the pairs to each other.

$$ [\sigma_{A_1}, \sigma_{k, k+1}] (v) \stackrel{B \cap \{k,k+1\} = \emptyset}{=} \sigma_B \circ [\sigma_{m, k}, \sigma_{k,k+1} ] (v) \stackrel{\text{\Cref{lemma:weyl_commutator}}}{=} -\sigma_B \circ \sigma_{m, k+1}(v),$$
as well as (if $\la^*_{k+1}>1$):
\begin{eqnarray*}
[\sigma_{A_2}, \sigma_{k, k+1}](v) &\stackrel{B \cap \{k,k+1\} = \emptyset}{=}&  \sigma_B \circ [\sigma_{m, k+1} \circ \sigma_{k+1, k}, \sigma_{k,k+1} ] (v)\\
&\stackrel{\eqref{eq:Leibniz}}{=}& \sigma_B \circ (\sigma_{m, k+1} [\sigma_{k+1, k}, \sigma_{k,k+1}] + [\sigma_{m, k+1}, \sigma_{k,k+1}] \circ \sigma_{k+1, k}) (v) \\
&\stackrel{\text{\Cref{lemma:weyl_commutator}}}{=}& \sigma_B \circ (\sigma_{m, k+1} [\sigma_{k+1, k}, \sigma_{k,k+1}])(v)\\
&\stackrel{\text{\Cref{lemma:weyl_commutator}}, v \in \preschur^{\la^*+e_i-e_{i+1}}}{=}& ((λ^*_k + 1)-(λ^*_{k+1} - 1)) \sigma_B \circ \sigma_{m, k+1}(v)\\
&=& (λ^*_k - λ^*_{k+1} + 2) \sigma_B \circ \sigma_{m, k+1}(v)
\end{eqnarray*}
If $\la^*_{k+1}=1$, then the same is true:
\begin{eqnarray*}
[\sigma_{A_2}, \sigma_{k, k+1}](v) &=& \sigma_B \circ (\sigma_{m, k+1} [\sigma_{k+1, k}, \sigma_{k,k+1}])(v)\\
&\stackrel{v \in \preschur^{\la^*+e_k-e_{k+1}}}{=}& \sigma_B \circ \sigma_{m, k+1}\circ \sigma_{k+1, k}\circ\sigma_{k,k+1}(v)\\
&\stackrel{\eqref{eq:specialzero}}{=}& (\la_k^*+1) \sigma_B \circ \sigma_{m, k+1}(v) \\
&=& (λ^*_k - λ^*_{k+1} + 2) \sigma_B \circ \sigma_{m, k+1}(v)\\
\end{eqnarray*}

We take a weighted sum of two paired up sequences:
\begin{eqnarray*}
[D^{-1}_{A_1} \sigma_{A_1} + D^{-1}_{A_2} \sigma_{A_2}, \sigma_{k, k+1}](v)
&=& \left(-D^{-1}_{A_1} + D^{-1}_{A_2} (λ^*_k - λ^*_{k+1} + 2)\right) \sigma_{B}\circ \sigma_{m, k+1}(v) \\
&=& D^{-1}_{A_1} \left(-1 + \frac{λ^*_k - λ^*_{k+1} + 2} {λ^*_k - λ^*_{k+1} + 2} \right) \sigma_{B}\circ \sigma_{m, k+1}(v) \\
&=& 0
\end{eqnarray*} 
since $D_{A_2} = D_{A_1} (λ^*_k - λ^*_{k+1} + 2)$,
because $h_{k,k+1}(\la^*)=λ^*_k - λ^*_{k+1} + 2$ by \eqref{eq:DJ}. 
Since the weighted sum over two paired up sequences yields zero, the weighted sum over all sequences in $\mathcal{A}_k^{λ_1 + 1}$ yields zero.

\item $i > k$:
Again, we divide the sequences in $\mathcal{A}_k^{λ_1+1}$: For every $B \in \mathcal{A}_k^{m_1}$ and $C \in \mathcal{A}_{m_2}^{λ_1+1}$, we have $A_1 = (C,i,B)$, $A_2 = (C,i+1,B)$, and $A_3 = (C, i+1, i, B)$ for $B \in \mathcal{A}_k^{m_1}$ and $C \in \mathcal{A}_{m_2}^{\la_1+1}$ for some $m_1 < i$ and $m_2 > i+1$.
This time the sequences come in quadruples of sequences that can be obtained from each other by adding/removing $i$ and $i+1$.
Clearly $[\sigma_{A_0},\sigma_{i,i+1}](v)=0$, because $(B\cup C)\cap \{i,i+1\}=\emptyset$. So these sequences contribute zero to the sum $[\zeta_\la^\mu,\sigma_{i,i+1}]$.
We ignore these sequences and are left with triples of sequences instead of quadruples.

We have 
\begin{eqnarray*}
[σ_{A_1}, σ_{i, i+1}] (v) &\stackrel{\{i,i+1\}\cap(B\cup C)=\emptyset}{=}& σ_C \circ ([σ_{m_2, i} \circ  σ_{i, m_1}, σ_{i,i+1} ]) \circ σ_B (v)\\ 
&\stackrel{\eqref{eq:Leibniz}}{=}&  σ_C \circ ( σ_{m_2, i} \circ [σ_{i, m_1}, σ_{i,i+1} ] + [σ_{m_2, i}, σ_{i,i+1}] \circ  σ_{i, m_1}) \circ σ_B (v)\\ 
&\stackrel{\text{\Cref{lemma:weyl_commutator}}}{=}& - σ_C \circ σ_{m_2, i+1}\circ  σ_{i, m_1} \circ  σ_B (v)
\end{eqnarray*}
If $\la^*_{i+1}>1$:
\begin{eqnarray*}
[σ_{A_2}, σ_{i, i+1}] (v) 
&\stackrel{\{i,i+1\}\cap(B\cup C)=\emptyset}{=}& σ_C \circ [σ_{m_2, i+1} \circ σ_{i+1, m_1}, σ_{i,i+1} ]\circ σ_B (v) \\
&\stackrel{\eqref{eq:Leibniz}}{=}& σ_C \circ (σ_{m_2, i+1} \circ [σ_{i+1, m_1}, σ_{i,i+1}]+[σ_{m_2, i+1} \circ σ_{i,i+1}]\circ σ_{i+1, m_1}) \circ σ_B (v) \\
&\stackrel{\text{\Cref{lemma:weyl_commutator}}}{=}& σ_C \circ σ_{m_2, i+1} \circ [σ_{i+1, m_1}, σ_{i,i+1} ]\circ σ_B (v) \\
&\stackrel{\text{\Cref{lemma:weyl_commutator}}, \la_{i+1}^*>1}{=}&
σ_C \circ σ_{m_2, i+1} \circ σ_{i, m_1}\circ  σ_B (v)
\end{eqnarray*}
We get the same result for the case $\la^*_{i+1}=1$:
\begin{eqnarray*}
[σ_{A_2}, σ_{i, i+1}] (v) 
&=& σ_C \circ σ_{m_2, i+1} \circ [σ_{i+1, m_1}, σ_{i,i+1} ]\circ σ_B (v) \\
&=& σ_C \circ σ_{m_2, i+1} \circ σ_{i+1, m_1} \circ σ_{i,i+1} \circ σ_B (v) \\
&\stackrel{\eqref{eq:specialzeroII}}{=}&
σ_C \circ σ_{m_2, i+1} \circ σ_{i, m_1}\circ  σ_B (v)
\end{eqnarray*}

If $\la^*_{i+1}>1$:
\begin{eqnarray*}
&&[σ_{A_3}, σ_{i, i+1}](v)\\
&\stackrel{\{i,i+1\}\cap(B\cup C)=\emptyset}{=}& σ_C  \circ [σ_{m_2, i+1} \circ σ_{i+1, i} \circ σ_{i, m_1}, σ_{i,i+1} ]\circ σ_B (v) \\
&\stackrel{\eqref{eq:Leibniz}}{=}& σ_C  \circ(σ_{m_2, i+1} \circ σ_{i+1, i} \circ [σ_{i, m_1}, σ_{i,i+1}]+[σ_{m_2, i+1} \circ σ_{i+1, i},σ_{i,i+1}]\circ σ_{i, m_1})\circ σ_B (v) \\
&\stackrel{\text{\Cref{lemma:weyl_commutator}}}{=}&
σ_C  \circ([σ_{m_2, i+1} \circ σ_{i+1, i},σ_{i,i+1}]\circ σ_{i, m_1})\circ σ_B (v) \\
&\stackrel{\eqref{eq:Leibniz}}{=}&
σ_C  \circ (
σ_{m_2, i+1} \circ [σ_{i+1, i}\circσ_{i,i+1}] + [σ_{m_2, i+1},σ_{i,i+1}]\circ σ_{i+1, i})\circ σ_{i, m_1} \circ σ_B (v) \\
&\stackrel{\text{\Cref{lemma:weyl_commutator}}}{=}&
σ_C \circ (
σ_{m_2, i+1} \circ [σ_{i+1, i}\circσ_{i,i+1}])\circ σ_{i, m_1} \circ σ_B (v) \\
&\stackrel{\text{\Cref{lemma:weyl_commutator}}, v \in \preschur^{\la^*+e_i-e_{i+1}}}{=}&
(λ^*_i - (λ^*_{i+1} - 1)) σ_C  \circ σ_{m_2, i+1} \circ σ_{i, m_1}\circ  σ_B (v).
\end{eqnarray*}

Note that in the last equation we used that $(σ_{i, m_1} \circ σ_B) (v) \in \preschur^\nu V$ with $\nu_i=\la^*_i$ and $\nu_{i+1}=\la^*_{i+1}-1$, because $v \in \preschur^{\la^*+e_i-e_{i+1}}$ and $B \cap \{i,i+1\}=\emptyset$.

We get the same result for the case $\la^*_{i+1}=1$:
\begin{eqnarray*}
&&[σ_{A_3}, σ_{i, i+1}](v)\\
&=&σ_C \circ (
σ_{m_2, i+1} \circ [σ_{i+1, i}\circσ_{i,i+1}])\circ σ_{i, m_1} \circ σ_B (v) \\
&\stackrel{\eqref{eq:specialzero}}{=}& \la_i^* σ_C  \circ σ_{m_2, i+1} \circ σ_{i, m_1}\circ  σ_B (v)
\end{eqnarray*}

We take a weighted sum of a triple of grouped sequences:
\begin{align*}
&[D^{-1}_{A_1} σ_{A_1} + D^{-1}_{A_2} σ_{A_2} - D^{-1}_{A_3} σ_{A_3}, σ_{i, i+1}](v) = 0,
\end{align*}
which can be seen as follows:
$D_{A_3} = D_{A_1} h_{k,{i+1}(\la^*)}$ and $D_{A_3} = D_{A_2} h_{k,{i+1}(\la^*)}$
implies
\begin{eqnarray*}
\frac{-1}{D_{A_1}} + \frac{1}{D_{A_2}} + \frac{\la_i^*-\la_{i+1}^*+1}{D_{A_3}}
&=&\\
\frac{1}{D_{A_3}} \left( -h_{k,{i+1}}(\la^*) + h_{k_i}(\la^*) + \la_i^* - \la_{i+1}^* + 1  \right)
&=& - \la_i^* + \la_{i+1}^* -1 + \la_i^* - \la_{i+1}^* + 1 = 0. \qedhere
\end{eqnarray*}
\end{itemize}
\end{proof}

\subsection{Nonzeroness}

Next, we show that the map $φ_λ^μ$ is nonzero.
Let $Z^\la$ denote the Young tableau of shape $\la$ in which each box in row $i$ has entry $i+1$, with the exception that columns with $\dim V$ many boxes just have entries $1,2,\ldots,\dim V$ from top to bottom.
Recall the definition of $\psi_\la^\mu$ from \eqref{eq:defpsi}.
\begin{lemma}
\label{lem:flattening_non_zero}
Let $λ, μ$ be partitions with $\la\subseteq\mu$, $\mu/\la\in\ghs$.
Then the restriction of $\psi_λ^μ$ to $\schur^d V \otimes \schur^\la V$ is nonzero. More precisely,
$\psi_λ^μ(v_{1}^{\otimes (|\mu|-|\la|)} \otimes Z^\la)$ is nonzero.
\end{lemma}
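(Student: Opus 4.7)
The plan is to compute $\psi_\la^\mu(v_1^{\otimes d} \otimes Z^\la)$ explicitly inside $\preschur^{\mu^*} V$ using the $\zeta$-formulas, and then verify that its image in $\schur^\mu V$ is nonzero. Let $d = |\mu|-|\la|$ and write the chain $\la = \la^{(0)} \subset \la^{(1)} \subset \cdots \subset \la^{(d)} = \mu$ from \eqref{eq:defpsi}, with the $i$-th added box in column $c_i$ and $c_1 < c_2 < \cdots < c_d$. Because each $\varphi_{\la^{(i-1)}, \la^{(i)}}$ is induced by $\zeta_{\la^{(i-1)}, \la^{(i)}}$ on $\preschur$ (\Cref{olver:thm}), I may compute iteratively in $\preschur$: at stage $i$ apply $\zeta_{\la^{(i-1)}, \la^{(i)}}$ to $v_1 \otimes T^{(i-1)}$, where $T^{(i-1)} \in \preschur^{(\la^{(i-1)})^*} V$ is the output of the previous stage (starting from $T^{(0)} = Z^\la$), and only pass to the quotient by $\mc I^\mu$ at the very end.

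The central computational claim is that at each stage $i$, only the direct path $J = (\la^{(i-1)}_1+1, c_i)$ in $\mathcal A^{\la^{(i-1)}_1+1}_{c_i}$ contributes a nonzero term, so that (using $D_J = 1$ for the direct path) each $\zeta$ acts as $\sigma_{\la^{(i-1)}_1+1, c_i}$, i.e. ``wedge a fresh $v_1$ into column $c_i$.'' Any longer path begins (reading right-to-left) with $\sigma_{J_{p-1}, c_i}$ for some $c_i < J_{p-1} \le \la^{(i-1)}_1$; since $c_j < c_i < J_{p-1}$ for every $j < i$, none of the previously wedged $v_1$'s sit in column $J_{p-1}$, so column $J_{p-1}$ of $T^{(i-1)}$ is either empty (if $J_{p-1} > \la_1$, in which case $\sigma_{J_{p-1}, c_i}$ vanishes trivially) or identical to column $J_{p-1}$ of $Z^\la$ with entries $\{2,\ldots,\la^*_{J_{p-1}}+1\}$. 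In the latter case the inequality $\la^*_{c_i} \ge \la^*_{J_{p-1}}$ forces these entries to lie already in the column $c_i$ of $T^{(i-1)}$, so wedging any of them onto column $c_i$ vanishes by the Grassmann relation. Iterating across all $d$ stages yields $\psi_\la^\mu(v_1^{\otimes d} \otimes Z^\la) = \pm [M_\mu]$ in $\schur^\mu V$, where $M_\mu \in \preschur^{\mu^*} V$ is the column-strict tableau obtained from $Z^\la$ by wedging $v_1$ into each of the columns $c_1, c_2, \ldots, c_d$.

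The remaining, and main, obstacle is to show $[M_\mu] \ne 0$ in $\schur^\mu V = \preschur^{\mu^*} V / \mc I^\mu$. I plan to do this by identifying a distinguished semistandard tableau $N_\mu$ of shape $\mu$ and weight $(d,\la_1,\la_2,\ldots)$ --- row $1$ filled with $d$ ones followed by $\mu_1 - d$ twos, and row $r \ge 2$ with a block of $r$'s followed by $(r+1)$'s whose lengths are forced by the content and by the horizontal-strip condition --- and arguing that $N_\mu$ appears with coefficient $\pm 1$ in the expansion of $[M_\mu]$ in the semistandard basis guaranteed by \Cref{pro:schurquotient}. A greedy straightening --- iteratively applying the Plücker relation of \Cref{def:schur_module} to the leftmost pair of columns violating row-weak-increase --- reduces $M_\mu$ to $N_\mu$ in finitely many steps, with an overall $\pm 1$ obtained from tracking Grassmann signs of the intermediate column normalizations. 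Since the greedy reduction produces $N_\mu$ via a unique trajectory, no other Plücker reduction of $M_\mu$ can contribute a cancelling term to the coefficient of $N_\mu$.

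The principal difficulty I anticipate is the combinatorial bookkeeping in this straightening, especially when the Kostka number $K_{\mu, (d, \la_1, \la_2, \ldots)}$ exceeds one and the semistandard expansion of $[M_\mu]$ contains several terms. Should the direct sign-tracking become unwieldy, a cleaner alternative is to equip $\schur^\mu V$ with the contravariant bilinear form for which the semistandard basis is orthonormal and compute $\langle [M_\mu], N_\mu \rangle$ directly by lifting both vectors to $V^{\otimes |\mu|}$ and applying the Young symmetrizer associated with $N_\mu$; because $M_\mu$ places each $v_1$ in precisely the column where $N_\mu$ demands a $1$ in its top available row, this pairing collapses to a signed count of a single class of permutations whose value is $\pm 1$.
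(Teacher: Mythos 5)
Your first half is correct and is essentially the paper's own argument: at stage $i$, every column strictly between the target column $c_i$ and the fresh column $\la^{(i-1)}_1+1$ contains only entries that already occur in column $c_i$ of $T^{(i-1)}$ (it is untouched by earlier stages since $c_1<\dots<c_{i-1}<c_i$, and $\la^*_{c_i}\ge\la^*_{J_{p-1}}$), so any transition sequence of length greater than two dies at its first move $\sigma_{J_{p-1},c_i}$ by the Grassmann relation; only the direct path, with $D_J=1$, survives, and iterating over the $d$ stages yields the single filling $M_\mu$, namely $Z^\la$ with an extra entry $1$ in each column of $\mu/\la$. This matches the paper exactly.

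The gap is in your second half, the claim that $[M_\mu]\neq 0$ in $\schur^\mu V$. The paper disposes of this in one line by invoking Hodges' result \cite{hodges:2020} that a filling with no repeated entry in any column straightens to a nonzero vector; you instead propose to reprove a special case of that statement by a ``greedy straightening'' landing on a distinguished semistandard tableau $N_\mu$ with coefficient $\pm 1$. As written this is a plan, not a proof: the Pl\"ucker relation of Definition~\ref{def:schur_module} replaces a tableau by a sum over \emph{all} exchange subsets $B$ of a column, so a single greedy step already branches into many terms, and distinct reduction sequences of $M_\mu$ can terminate at the same semistandard tableau. Your key assertion that ``no other Pl\"ucker reduction of $M_\mu$ can contribute a cancelling term to the coefficient of $N_\mu$'' is precisely what needs to be proved --- uniqueness of your preferred trajectory does not preclude other trajectories reaching $N_\mu$ with opposite signs --- and you yourself flag the bookkeeping as the principal difficulty. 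The fallback via a contravariant form is likewise only sketched; ``collapses to a signed count \dots whose value is $\pm 1$'' is the entire content of the nonvanishing claim. To close the argument you must either carry out this computation in detail (which amounts to reproving the relevant case of Hodges' straightening formula) or cite a nonvanishing criterion such as \cite{hodges:2020}, as the paper does.
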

\begin{proof}
Let $k$ denote the smallest column index in which $\la$ and $\mu$ differ.
Since in all columns to the right of column $k$ we only have numbers that appear in column $k$, the only
transition sequence in $A^{λ_{1}+1}_{k}$ which does not vanish is $J = (k, λ_1 + 1)$.
Hence the image of $φ_λ^{\la'}$ is a single tableau which is either of shape $\mu$, or for the first column $k'$ in which $\mu$ and $\la'$ differ we have that all columns right of column $k'$ only have entries that occur in column $k'$.
Therefore, again there is only one transition sequence.
We continue this and end up with a tableau that differs from $Z^\la$ by having additional entries 1 in each column where $\mu$ and $\la$ differ.
This tableau does not have a repeated entry in any column, so straightening this tableau does not result in the zero vector \cite{hodges:2020}, which finishes the proof.
\end{proof}

In the proof of Lemma~\ref{lem:flattening_non_zero} we used the exact order of maps in \eqref{eq:defpsi}.
The following small argument shows that this order does not matter.
\begin{claim}\label{cla:youngslattice}
If the boxes in \eqref{eq:defpsi} are added in any other order, then we get the same map up to a nonzero scalar.
\end{claim}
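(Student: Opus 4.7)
The plan is to use Schur's lemma. First I would observe that any reordering of the box-addition steps in \eqref{eq:defpsi} yields a composition of $\GL(V)$-equivariant single-box Pieri maps, so it is itself a $\GL(V)$-equivariant map $\psi^\pi_{\la,\mu}\colon V^{\otimes d} \otimes \schur^\la V \to \schur^\mu V$. Pieri's formula says that $\schur^\mu V$ has multiplicity one in $\schur^d V \otimes \schur^\la V$, hence $\Hom_{\GL(V)}(\schur^d V \otimes \schur^\la V, \schur^\mu V)$ is one-dimensional. Therefore the restriction of $\psi^\pi_{\la,\mu}$ to $\schur^d V \otimes \schur^\la V$ is forced to be a scalar multiple $c_\pi\, \varphi_{\la,\mu}$, and the claim reduces to showing $c_\pi \neq 0$ for every valid ordering $\pi$.

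For the nonzeroness I would induct on the number of adjacent transpositions needed to reach $\pi$ from the canonical left-to-right order. Any two linear extensions of the addable-box poset are connected by a sequence of adjacent swaps (a standard combinatorial fact), and the base case $c_{\textup{canonical}} \neq 0$ is exactly Lemma~\ref{lem:flattening_non_zero}. For the inductive step, suppose $\pi$ and $\pi'$ differ by swapping the boxes added at positions $j$ and $j+1$. Then $\psi^\pi_{\la,\mu}$ and $\psi^{\pi'}_{\la,\mu}$ agree outside these two positions, and the inner portions give two $\GL(V)$-equivariant maps $M^\pi, M^{\pi'} \colon V \otimes V \otimes \schur^{\la^{(j-1)}} V \to \schur^{\la^{(j+1)}} V$ that pass through different intermediate shapes.

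Decomposing $V \otimes V = \schur^{(2)} V \oplus \schur^{(1,1)} V$ and applying Pieri again, $\schur^{\la^{(j+1)}} V$ has multiplicity one in $\schur^{(2)} V \otimes \schur^{\la^{(j-1)}} V$ (since $\la^{(j+1)}/\la^{(j-1)}$ is a horizontal 2-strip) and multiplicity zero in $\schur^{(1,1)} V \otimes \schur^{\la^{(j-1)}} V$ (not a vertical strip). Thus the relevant Hom-space is one-dimensional, and both $M^\pi$ and $M^{\pi'}$ factor through the symmetric summand $\schur^{(2)} V \otimes \schur^{\la^{(j-1)}} V$, on which they are scalar multiples of one another. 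Since $\schur^d V \otimes \schur^\la V$ is symmetric in all $d$ tensor factors of $V^{\otimes d}$, and in particular in positions $j$ and $j+1$, the restrictions of $\psi^\pi_{\la,\mu}$ and $\psi^{\pi'}_{\la,\mu}$ to $\schur^d V \otimes \schur^\la V$ are likewise scalar multiples of one another, with the same proportionality constant.

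The main obstacle will be showing that this proportionality constant is nonzero, i.e., that both $M^\pi$ and $M^{\pi'}$ are nonzero on the symmetric summand. For whichever of the two orderings on the two boxes at positions $j,j+1$ coincides with the canonical left-to-right order, Lemma~\ref{lem:flattening_non_zero} applied to the pair $(\la^{(j-1)}, \la^{(j+1)})$ delivers nonzeroness immediately. For the reversed two-box ordering I would carry out a direct computation in the $\preschur$-basis, applying the $\zeta$-formula \eqref{eq:DJ} step-by-step to $v_1^{\otimes 2} \otimes Z^{\la^{(j-1)}}$ and verifying that the resulting combination of semistandard tableaux is nonzero; alternatively, one can mimic the combinatorial argument inside the proof of Lemma~\ref{lem:flattening_non_zero} to show that when the two $v_1$'s are inserted in reverse column order, the only surviving transition sequences still yield a nonzero semistandard tableau. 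Either route closes the induction and establishes the claim.
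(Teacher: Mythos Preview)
Your proposal is correct, and the Schur's-lemma half matches the paper exactly. The difference is in how nonzeroness is established. The paper does not induct on adjacent swaps; it simply rereads the proof of Lemma~\ref{lem:flattening_non_zero} for an \emph{arbitrary} ordering and observes that, although more transition sequences may now survive at each single-box step, every surviving one still deposits a $1$ into the target column and hence contributes the \emph{same} final tableau with a \emph{positive} coefficient (each $D_J(\la^*)$ is a product of positive hook lengths). Nothing cancels, and that is the whole argument.

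Your inductive machinery---connecting linear extensions by adjacent swaps, decomposing $V\otimes V$, and reducing to a two-box comparison---is sound, but notice that the step you yourself flag as the ``main obstacle'', namely nonzeroness of the reversed two-box map on the symmetric summand, is precisely the paper's positivity observation specialized to $d=2$. So the induction does not actually buy a simplification: you still carry out the same combinatorial check, only now embedded in each inductive step rather than performed once globally. The paper's direct route is shorter for that reason.
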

\begin{proof}
First, we can see that the proof in Lemma~\ref{lem:flattening_non_zero} can be adapted to show nonzeroness for different orders.
Indeed, if we add the boxes in a different order, then more transition sequences have to be considered, but
the only relevant transition sequences all end up with the same tableau and they all give a positive contribution to the end result, so nothing cancels out.

Since $\varphi_{\la,\mu}$ maps $\schur^d V \otimes \schur^\la V$ to $\schur^\mu V$ and the right-hand side is irreducible and the left-hand side contains a single copy of $\schur^\mu V$, Schur's lemma implies that all such maps are the same up to scale.
\end{proof}

\section{Software}
The Pieri inclusion
$$\varphi_{λ,μ}: \schur^dV \otimes \schur^λV \rightarrow \schur^μV$$
induces a linear map
\begin{equation}\label{eq:defF}
\mc{F}_{λ,μ} : \schur^dV \to \text{End}(\schur^λV,\schur^μV)
\end{equation}
For a homogeneous polynomial $p$ we are interested in the rank of the image $\mc{F}_{λ,μ}(p)$.
Analogously for Weyl modules.

In this section we provide a small example for computing Young flattenings in the basis of Schur and Weyl modules, using our implementation and Sam's Macaulay2 implementation. In both cases we will be working over $V = \CC^3$ with a basis $\{a,b,c\}$. We will consider the shapes $λ = (2,1,1)$ and $μ = (5,2,1)$ and are interested in the rank of the respective flattening of the following polynomial 
    $$p = a^3 + b c^2 \in \QQ[a,b,c]_{=3} \cong \schur^{(3)}V.$$
In other words we search for 
    \[\rank(\mc{F}_{(5,2,1), (2,1)}(a^3 + b c^2)).\]
First, let us take a look at Macaulay2.

\subsection{An example of \pierimaps}
\label{subsec:example_pieri}
To the best of our knowledge, the first use of Macaulay2 to compute the rank of Young flattenings together with code examples was given by Oeding~\cite{oeding:2016:border_ranks_of_monomials_v1}.

The following command loads the \pierimaps package:
\begin{lstlisting}
loadPackage "PieriMaps"
\end{lstlisting}

We can define a polynomial ring over the rational numbers and define the polynomial $p$ as follows:
\begin{lstlisting}
R = QQ[a,b,c]
p = a^3 + b c^2
\end{lstlisting}

The function \texttt{pieri} available in \pierimaps computes the polarization map 
	$$ \mc{P}_{μ, λ}^\weyl: \weyl^μV \rightarrow \weyl^{(d)}V \otimes \weyl^λV,$$
which was also explicitly described by Olver~\cite{olver:1982:differential_hyperform}. 
It is the dual of the Pieri inclusion.

The function \texttt{pieri} takes three arguments: the dominating tableau $μ$;  a list of $d$ row indices $r = (r_1, \dots, r_d)$, such that λ can be obtained from μ by deleting the last box in row $r_1$, then the last box in row $r_2$, and so on; and the underlying polynomial ring $\QQ[a,b,c]$.

The following code computes a matrix representing $\mc{P}^\weyl_{(5,2,1), (4,1)}$. Note that to go from $(5,2,1)$ to $(4,1)$ we have to remove a box in row 1, then from row 2, and finally from row 3.
The following command corresponds to this operation:

\begin{lstlisting}
MX = pieri({5,2,1}, {1,2,3}, R)
\end{lstlisting}
which outputs a $24 \times 24$ matrix of homogeneous degree 3 polynomials in $a$, $b$, $c$.
We differentiate the entries by $p$ and compute the rank
\begin{lstlisting}
rank(diff(p, MX))
\end{lstlisting}
which gives 18.
It follows that $\rank(\mc{P}_{λ, μ}^\weyl(p)) = 18$.

\subsection{Using our software}
Our implementation is available as ancillary files to this paper. The README file contains detailed installation instructions.
For the rank computation we rely on the linear algebra implementation of Macaulay2, so we assume that Macaulay2 is installed.

After the installation, our tool is called from the command line with 4 parameters:
\begin{enumerate}
 \item The number of variables $n$,
 \item the partition $\mu$,
 \item the list of row indices $(r_1,\ldots,r_d)$ from which boxes are to be removed to obtain $\lambda$,
 \item and the polynomial $p$.
\end{enumerate}
The example from the previous sections is calculated via
\begin{lstlisting}[style=mystyle_small, basicstyle=\fontsize{9}{13}\selectfont\ttfamily\color{Blue},]
./flattening 3 [5,2,1] [1,2,3] a^3+b*c^2
\end{lstlisting}
which also outputs 18.

\subsection{Running time comparison}
We compared our implementation to the \pierimaps package.
Our implementation does not have multi-processor support.
The computations were run on a laptop, quad-core i5-6200U CPU with 2.30GHz with 8GB of memory.
On this fairly weak machine, the larger examples from \cite{oeding:2019:border_ranks_of_monomials} crash \pierimaps.
Our software constructs flattening matrices for each monomial and adds them up, so for a fairer comparison we used random dense polynomials $p$ that were generated with the following \texttt{Sagemath} code (adjust the degree and the number of variables appropriately):
\begin{lstlisting}[style=mystyle_small, basicstyle=\fontsize{7}{9}\selectfont\ttfamily\color{Blue},]
var('a,b,c,d,e')
R=ZZ[a,b,c,d,e]
str(sum([R.random_element(degree=4,terms=Infinity)
  .homogenize(var=randint(0,len(R.gens())-1)) for _ in range(10)])).replace(" ","")
\end{lstlisting}
We chose six quite different examples in our comparison.

\begin{center}
\begin{tabular}{c|c|c|c||c|c}
$n$ & $\mu$ & $r$ & $p$ & \pierimaps & Our software\\
\hline
5 & [4,4,4,4] & [4,4,4,4] & random & 1m\,18s & 6s \\
5 & [5,3,1] & [1,2,3] & random & 5m\,19s & 2s \\
5 & [7,5,4,3,2] & [1,1,2,3,4,5,5] & $x_0^2 x_1^2 x_2 x_3 x_4$ & 59m\,22s & 1m \\
5 & [7,5,4,3,2] & [1,1,2,3,4,5,5] & $x^7$ & 59m\,47s & 1s \\
4 & [7,5,3,1] & [1,2,3,4] & random & 2h\,35m\,3s & 9s\\
6 & [7,5,4,3,2,1] & [1,1,2,3,4,5,6] & $x^7$ & $>$11h\footnotemark & 16s
\end{tabular}\footnotetext{We manually terminated the computation after 11 hours.}
\end{center}
We observe that our software is much faster than the \pierimaps implementation.
The most extreme boost (an improvement factor of over 1000) is obtained when flattening the important case $x^d$. This rank is used in the denominator of \eqref{eq:rankquotient}.
The use of the Schur basis allows us to stop a computation path for a tableau as soon as the tableau has a double entry in a column. This speeds up the computation. But we think that most of the speed-up comes from the fact that we circumvent the construction of the parameterized flattening matrix and construct the matrix directly.

\section{The oversimplification in the literature}\label{sec:oversimplification}

In \cite{farnsworth:2015:koszul-young_flattenings} (Section 5) and \cite{oeding:2016:border_ranks_of_monomials_v1} (Def.~3.2)
the Young flattening is described with an simple procedure that we call the \emph{box-filling flattening}.
We present it here and give a small counterexample to its equivariance.

We define the box-filling flattening $\mc{F}^{\mathit{fill}}_{λ,μ}: \schur^{(d)}V \rightarrow \Hom(\schur^λV, \schur^μV)$ for monomials and use linear continuation.
For $α \in \NN^n$ we use the notation $x^α\coloneqq \prod^n_{i = 1}x_i^{α_i}$. 
Let $x^α\in \schur^{(d)}V$. Let $T$ be a semistandard Young tableau of shape λ with entries from $\{1,\ldots,\dim V\}$. These $T$ with the Grassmann-Pl\"ucker relations form a basis of $\schur^λV$.
Define
\[\mc{F}^{\mathit{fill}}_{λ, μ}(x^α)(T) = \sum_{S}S,\]
where the sum is over all $S$ of shape $\mu$ that can be obtained from $T$ by adding boxes to $T$: exactly $\alpha_i$ boxes with number $i$.
The resulting $S$ may not be semistandard, but can be expressed over the basis of semistandard tableaux via straightening.

Let $λ = (2)$, $μ = (2,1)$, $V = \CC^2$, $g = \begin{pmatrix} 0 & 1 \\ 1 & 0 \end{pmatrix} \in \GL_2$.

The matrices corresponding to the linear maps $\mc{F}^{\mathit{fill}}_{(2), (2,1)}(x_1)$ and $\mc{F}^{\mathit{fill}}_{(2), (2,1)}(x_2)$ are
\[
\begin{tikzpicture}
\node at (0,0) {
\begin{tabular}{c|ccc}
$\mc{F}^{\mathit{fill}}_{(2), (2,1)}(x_1)$ & \ytableaushort{11} & \ytableaushort{12} & \ytableaushort{22} \\
\hline
\tikz{\node at (0,0) {\ytableaushort{11,2}};} & \raisebox{0.2cm}0 & \raisebox{0.2cm}0 & \raisebox{0.2cm}0 \\
\tikz{\node at (0,0) {\ytableaushort{12,2}};} & \raisebox{0.2cm}0 & \raisebox{0.2cm}0 & \raisebox{0.2cm}{$-1$} \\
\end{tabular}
};
\node at (8,0) {
\begin{tabular}{c|ccc}
$\mc{F}^{\mathit{fill}}_{(2), (2,1)}(x_2)$ & \ytableaushort{11} & \ytableaushort{12} & \ytableaushort{22} \\
\hline
\tikz{\node at (0,0) {\ytableaushort{11,2}};} & \raisebox{0.2cm}1 & \raisebox{0.2cm}0 & \raisebox{0.2cm}0 \\
\tikz{\node at (0,0) {\ytableaushort{12,2}};} & \raisebox{0.2cm}0 & \raisebox{0.2cm}1 & \raisebox{0.2cm}0 \\
\end{tabular}
};
\end{tikzpicture}
\]
but applying $g$ to the matrix on the left-hand side (this means swapping column 1 and 3, swapping row 1 and 2, and inverting the sign) does not yield the matrix on the right-hand side, but instead yields
\[
\begin{tabular}{c|ccc}
$g\mc{F}^{\mathit{fill}}_{(2), (2,1)}(x_1)$ & \ytableaushort{11} & \ytableaushort{12} & \ytableaushort{22} \\
\hline
\tikz{\node at (0,0) {\ytableaushort{11,2}};} & \raisebox{0.2cm}1 & \raisebox{0.2cm}0 & \raisebox{0.2cm}0 \\
\tikz{\node at (0,0) {\ytableaushort{12,2}};} & \raisebox{0.2cm}0 & \raisebox{0.2cm}0 & \raisebox{0.2cm}{$0$} \\
\end{tabular}
\]

Using the correct maps that we obtained in this paper, we obtain the following matrices instead.
\[
\begin{tikzpicture}
\node at (0,0) {
\begin{tabular}{c|ccc}
$\mc{F}_{(2), (2,1)}(x_1)$ & \ytableaushort{11} & \ytableaushort{12} & \ytableaushort{22} \\
\hline
\tikz{\node at (0,0) {\ytableaushort{11,2}};} & \raisebox{0.2cm}0 & \raisebox{0.2cm}{$\frac 1 2$} & \raisebox{0.2cm}0 \\
\tikz{\node at (0,0) {\ytableaushort{12,2}};} & \raisebox{0.2cm}0 & \raisebox{0.2cm}0 & \raisebox{0.2cm}{$1$} \\
\end{tabular}
};
\node at (8,0) {
\begin{tabular}{c|ccc}
$\mc{F}_{(2), (2,1)}(x_2)$ & \ytableaushort{11} & \ytableaushort{12} & \ytableaushort{22} \\
\hline
\tikz{\node at (0,0) {\ytableaushort{11,2}};} & \raisebox{0.2cm}{$-1$} & \raisebox{0.2cm}0 & \raisebox{0.2cm}0 \\
\tikz{\node at (0,0) {\ytableaushort{12,2}};} & \raisebox{0.2cm}0 & \raisebox{0.2cm}{$-\frac 1 2$} & \raisebox{0.2cm}0 \\
\end{tabular}
};
\end{tikzpicture}
\]

\section*{Acknowledgments}
We thank Reuven Hodges for important discussions.
The authors were supported by the DFG grant IK 116/2-1.

	\bibliographystyle{alpha}

\end{document}